\numberwithin{equation}{section}
\def\bR{\mathbb{R}}
\def\bE{\mathbb{E}}
\def\diff{\mathop{}\!\mathrm{d}}
\def\qshuffle{\,\widehat{\shuffle}\,}
\theoremstyle{definition}
\newtheorem{definition}{Definition}[section]
\newtheorem{example}[definition]{Example}
\theoremstyle{plain}
\newtheorem{theorem}[definition]{Theorem}
\newtheorem{proposition}[definition]{Proposition}
\newtheorem{corollary}[definition]{Corollary}
\theoremstyle{remark}
\newtheorem{remark}[definition]{Remark}
\title{Generalized Euler-Maclaurin formula and Signatures} 
\date{}
\author{
 Carlo Bellingeri  \thanks{ \texttt{carlo.bellingeri@univ-lorraine.fr}}\\
  IECL, Unversité de Lorraine
  \and
 Peter K. Friz \thanks{\texttt{friz@math.tu-berlin.de}}\\
 TU Berlin \& WIAS
 \and
Sylvie  Paycha 
\thanks{\texttt{paycha@math.uni-potsdam.de}}\\
Universit\"at Potsdam
}
\begin{document}
	
	\maketitle

	\begin{abstract}
	   The  Euler-Maclaurin formula  which relates a discrete sum with an integral, is generalised to the setting of Riemann-Stieltjes sums and integrals on stochastic processes whose paths are a.s. rectifiable,  namely, continuous and with bounded variation. For this purpose, new variants of the signature are introduced, such as the flip and the sawtooth signature.  The counterparts of the Bernoulli numbers that arise in the classical  Euler-Maclaurin formula  are shown to be   the integration constants in the  repeated integration by parts which ``recursively minimise the error'' at every truncation level.
	\end{abstract}
 \medskip
 
\noindent  \textbf{Keywords:} Euler-Maclaurin Formula, Signature, Riemann-Stieltjes Integral, Bernoulli Numbers.

\medskip

\noindent  \textbf{MSC 2020:} 60L90, 60L10, 26A45.
	\tableofcontents

	\section{Introduction}
The Euler-Maclaurin (EML) formula,  which compares the  sum  of the values of a function  $f\colon [0,N]\to \mathbb{R}$, with its integral, arises in various contexts and many disguises. Written in its original form, it states that for any function $f\in C^m([0,N])$   and any integer $m\geq 0$   the following identity holds
\begin{equation}\label{eq:clas_EML}
\sum_{k\in [[0, N]]^{\pm}}f(k)=\int_0^Nf(s)\diff s+ \sum_{l=1}^m \frac{B_l^{\pm}}{l!}[f^{(l-1)}(x)]_{x=0}^N+ R_m(f)\,.
\end{equation}
 In what follows $[[0, N]]^{-}= [[0,N-1]]$ denotes the index set for  the backward sum,  $[[0, N]]^{+}= [[1,N]]$ the index set for the forward sum and the sequence $\{B_l^{\pm}\}_{l\geq 0}$  stands for  the \emph{Bernoulli numbers}:
\[B^{\pm}_0=1\,,\quad B^{\pm}_1=\pm \frac{1}{2}\,,\quad  B^{\pm}_2= \frac{1}{6}\,, \quad B^{\pm}_4= - \frac{1}{30}\,,\quad B^{\pm}_6= \frac{1}{42}\,, \quad  B^{\pm}_8= \ldots,\]
\[B^{\pm}_3=B^{\pm}_5= B^{\pm}_7= \ldots =0,\]
see \cite{Saalschtz1893} for a full account on  different ways of deriving them. Furthermore, the remainder  has  the explicit integral form (whether for negative and positive Bernoulli numbers):
\begin{equation}\label{eq:clas_rem}
R_m(f)=\frac{(-1)^{m+1}}{m!}\int_0^Nf^{(m)}(s)P_m(s-[s])\diff s.\end{equation}
Here $\{P_m(s)\}_{m \geq 0}$ stands for  the family of Bernoulli polynomials
\[P_m(s)= \frac{\mathrm{d} ^m}{\mathrm{d} x^m}\frac{xe^{sx}}{e^x-1}|_{x=0}\]
and $[s]$ the integer part of $s$.

The EML formula is remarkable for many reasons and has a long history (see e.g. \cite{KacCheung} and references therein). It   was discovered independently by L. Euler and C. Maclaurin in
the first half of the XVIII-th century. However,  the essential remainder term $R_m$  was only later introduced by S.D. Poisson. in 1827, see \cite{Ostrowski1969}. 
 
 A  first consequence of the EML formula  is an exact formula for polynomial functions: indeed  for $f(x)=x^p$ with  $p\geq 0$ integer  Formula \eqref{eq:clas_EML} boils down to the Faulhaber formula:
\begin{equation}\label{Faul1}
\sum_{k\in [[0, N]]^{\pm}}k^p=\frac{1}{p+1}\sum_{j=0}^p\binom{p+1}{j}N^{p+1-j}B^{\pm}_j\,.
\end{equation}
This simpler identity can be also proven using different methods and it provides an equivalent  definition of the sequence $B^{\pm}$. Although the associated series appearing in \eqref{eq:clas_EML} is typically divergent,  the remainder is small in an appropriate sense which makes it useful in numerical mathematics and computer science, see \cite{Knuth94}. In addition to that, the EML formula  has several applications in number theory obtained for various choices of $f$. For $f(x)=x^{-1}$,  one obtains the well-known Euler-Mascheroni constant as the limit of the difference between the  harmonic series and the logarithm. For $f(x)=\log x$, one derives Stirling's approximation  formula of the factorial,   The EML formula applied to the function $f(x)=x^{-s}$ for $s\in \mathbb{C}$ yields a meromorphic extension of  the zeta function $s\mapsto \sum_{n=1}^\infty n^{-s}$ which is holomorphic on the half plane $\Re(s)>1$. Its generalisation to discrete sums on cones, which is relevant in   toric geometry in that it relates to the Todd genus, can be interpreted as an  algebraic Birkhoff factorisation which  plays a major role in  the context of renormalisation  \cite{Guo2015ACA}, \cite{guo2017}.


Classical proofs  of the EML formula typically use repeated integration by part.  Some  express the integral in terms of a discrete sum e.g. \cite{lampret01}, others are based on a careful induction procedure, see e.g. \cite[Chapter 9]{Knuth94}.  

A proof  of the EML formula by Boas   \cite{Boas77} which  serves as  a useful starting point for what follows, expresses the discrete sum as an integral. In   the backward sum, we first extend   $f$ to $\mathbb R$ with a constant value and then write the  sum as an integral:
\[\sum_{k=0}^{N-1}f(k)=\int_{-a}^{N-a} f(t) \diff [t+1]\,,\] 
where as before, $[t]$ stands for the integer part of $t$ and $0<a<1$.  Setting $Q_1(t)=t-[t+1]+ c_1$ with $c_1\in \mathbb R$,  the difference between sum and integral can be written as
\[\sum_{k=0}^{N-1}f(k)-\int_{-a}^{N-a} f(t) \diff t=-\int_{-a}^{N-a} f(t) \diff (Q_1(t))\,.\]
Assuming $f$ is smooth,  a first integration by parts yields
\[\sum_{k=0}^{N-1}f(k)-\int_{-a}^{N-a} f(t) \diff t=(-a+c_1)\left( f(-a)-f(N-a)\right)+ \int_{-a}^{N-a} Q_1(t)  f'(t) \diff t\,.\]
Repeated   integration by parts  then yields
\begin{align*}
        &\sum_{k=0}^{N-1}f(k)-\int_{-a}^{N-a} f(t) \diff t=\\&(-c_1+a)[f(s)]_{s=-a}^{N-a} +\sum_{l=2}^{m} (-1)^l[f^{(l-1)}(s)Q_l(s)]^{N-a}_{s=-a}+(-1)^{m+1}\int_{-a}^{N-a} Q_{m}(t) f^{(m)}(t) \diff t\,,
\end{align*}
where $\{Q_l\}_{l\geq 2}$ is    any sequence of functions   such that $Q_l'= Q_{l-1}$. Formula \eqref{eq:clas_EML} then follows from picking the specific choice of periodic functions $Q_l(t)=\frac{1}{l!}P_l(t-[t])$ and letting $a$ tend to $0$. This choice is the unique one allowing  a sequence of periodic functions $\{Q_l\}_{l\geq 2}$ and turning  the terms $[f^{(l-1)}(s)Q_l(s)]^{N-a}_{s=-a}$ into $C_l[f^{(l-1)}(s)]^{N}_{s=0}$  for some constant  $\{C_l\}_{l\geq 2}$ which can be explicitly computed.


The purpose of this work is to take a fresh view on EML, replacing integration in $t$ by Riemann-Stieltjes integration against some rectifiable path $X$, i.e. continuous and finite variation, possibly the sample path of a rectifiable  process (a process whose trajectories are a.s. rectifiable, see e.g. \cite[Chapter 1]{protter2005stochastic})
\[ \int_0^N f (t) \diff t \rightsquigarrow \int_0^N f (X_t) \diff X_t\,.\]

The discrete counterparts on the level of sums are the \emph{backward} and \emph{forward Riemann-Stieltjes type sums}:
	\[I^{-}_N(f, X)= \sum_{k = 0}^{N-1} f (X_k) \delta X_k  \,,\quad I^{+}_N(f, X)=\sum_{k = 0}^{N-1} f (X_{k+1}) \delta X_k\,. \]
 where  $\delta X_k = X_{k+1}- X_k$.

The above sums and integrals are multidimensional, in that $X_t $ lies in a Banach space $ V$ and   $f\colon V \to L(V,W)$ is a smooth map in the  Fréchet differentiable sense, where $L(V,W)$ denotes the set of continuous linear maps from  $V$ to a Banach space $W$. 
We  further assume that  $f\colon V \to L(V,W)$ belongs to  the space $C^n(V ,L(V,W))$  of continuous maps  which are $n$ times continuously differentiable in the Fréchet sense.  For any integer $0\leq l\leq n$  we denote by $D^lf$ 
 the $l-$th Fréchet derivative of $f\in C^n(V ,L(V,W))$  which by construction can be represented as a map
\[D^lf\colon V \to L (V^{\otimes (k+1)}, W)\,.\]

In contrast with the  classical case, where $\delta X_k$ is constant,  the essential ingredients to derive an EML formula seem to be missing in the Stieltjes setting. Also, $X$ has a priori no second derivative, therefore we are in no position to apply the usual EML formula to $t \mapsto f (X_t) \dot{X_t} $ (which further would entangle $f$ and $X$ in an	undesirable fashion). 


However, much in the spirit of Boas' proof recalled above, in the  setting of rectifiable processes, both the backward and forward Riemann-Stieltjes sums can be represented by integrals.
Also,    a standard integration by parts identity is available on the class of functions under consideration. This will result from   introducing   a  path $Z$ with values in $T_1((V))$, the space of tensor series with constant term equal to $1$ (this will be made explicit later). Our main insight,  of interest even in a classical EML setting (where this idea is new to our knowledge) is that the precise EML formula, which amounts to iteratively choosing integration constants in the  repeated integration by parts, results from  ``minimising the error'', for every truncation level $m\geq 1$. This optimality condition, which can be formulated in n “Hilbert” generality, thereby including classes of square-integrable processes,  is the key to a general Riemann-Stieltjes EML formula.

	\subsection*{Main result and its by-products}
	
Our main result is a generalised Euler-Maclaurin formula for a rectifiable process, which  involves sawtooth signatures. 
\begin{theorem}\label{main_thm_1}
For any rectifiable  process $X\colon [0,N]\to V$ with values in a Banach space $V$,   there exist two elements $b^{\pm,*}\in T_1((V))$, $b^{\pm,*}=(1, b^{\pm,*}_1, \ldots) $ called \emph{optimal backward} and \emph{forward tensors}  and two rectifiable  processes $Z^{\pm,*}(X)\colon [0, N]\to T_1((V))$, $Z^{\pm, *}(X)=(1,Z^{\pm,1,*}(X), \ldots)$  called \emph{optimal backward} and \emph{forward sawtooth signatures}  given by  Definition \ref{defn_optimal} such that $Z^{\pm,1,*}_0(X)= -b^{\pm,*}_1 $, $Z^{\pm,l,*}_0(X)= b^{\pm,*}_l$ and the following \emph{generalised EML formula} holds:
\begin{equation}\label{eq:EML_main_formula}
I_N^{\pm} (f, X) = \int_0^Nf(X_t)\diff X_t+[f(X_t)b^{\pm,*}_1]_{t=0}^N +\sum_{l = 2}^{m}(- 1)^{l}[D^{l-1}f(X_t) Z^{\pm, l,*}_t(X)]_{t=0}^N+ R^{m,\pm,*}(f, X)
\end{equation}
for any $f\in C^{m}(V, L(V,W))$ with $m\geq 1$, where the remainder is given by the Stieltjes integral 
\begin{equation}\label{eq:rem_EML}
   R^{m,\pm,*}(f, X)=(- 1)^{m+1}\int_{0}^{N}  D^mf(X_t)  \diff  Z^{\pm,m+1, *}(X)\,.
\end{equation}
 When $V$ is a Hilbert space, $b^{\pm,*}$ and $Z^{\pm,*}(X)$ are up to a sign factor the unique solution of the recursive minimisation problem  for the remainder in Theorem \ref{prop_optimisation_P}.
\end{theorem}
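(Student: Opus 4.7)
The plan is to follow the Boas-style argument outlined in the introduction, lifted from the scalar deterministic setting to $V$-valued Riemann--Stieltjes integration against a rectifiable path, and made iterative in a way that tracks each integration constant as a tensor in $V^{\otimes k}$.

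First I would establish a first-level sawtooth identity. For rectifiable $X$, introduce a $V$-valued, piecewise-linear ``sawtooth'' process $Z^{\pm,1}_t$ whose Stieltjes differential $\diff Z^{\pm,1}_t$ encodes the discrepancy between $\diff X_t$ and the Dirac-mass increments $\Delta X_k\,\delta_{\{k\}}$ defining $I_N^{\pm}(f,X)$. A direct computation then yields
\[I_N^{\pm}(f,X) - \int_0^N f(X_t)\,\diff X_t \;=\; -\int_0^N f(X_t)\,\diff Z^{\pm,1}_t,\]
with $Z^{\pm,1}_0 = b^{\pm,*}_1 \in V$ as a free parameter. Since $X$ and $Z^{\pm,1}$ are both of bounded variation, the standard Stieltjes integration by parts (valid for $C^1$ integrands against BV integrators, hence pathwise a.s. in our setting) turns this into the boundary term $-[f(X_t) b^{\pm,*}_1]_{t=0}^N$ plus a new Stieltjes integral of $Df(X_t)$ against a rectifiable primitive of $Z^{\pm,1}\otimes \diff X$.

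Next I would iterate. At each level $k\geq 2$, define $Z^{\pm,k}_t \in V^{\otimes k}$ as a rectifiable pathwise primitive of $Z^{\pm,k-1}_t\otimes \diff X_t$, with the $k$-th integration constant encoded by the initial value $Z^{\pm,k}_0 = b^{\pm,*}_k$. Applying integration by parts $m$ times for $f \in C^m(V,L(V,W))$ and collecting the boundary contributions gives precisely \eqref{eq:EML_main_formula} with remainder \eqref{eq:rem_EML}. This identity holds almost surely for an arbitrary choice of the constants $\{b^{\pm,*}_k\}_{k\geq 1}$; the shuffle/tensor algebra structure on $T_1((V))$ is what guarantees that the iterated primitives assemble into the tensor series $Z^{\pm,*}(X)$ announced in the statement.

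To pin down the optimal tensors, I would appeal to the optimisation problem of Proposition \ref{prop_optimisation_P}: at each level $k$, the tensor $b^{\pm,*}_k$ is chosen to minimise a natural norm of the $k$-th order remainder viewed as a linear functional of $D^{k-1}f$. In the Hilbert setting this is a strictly convex quadratic programme, hence admits a unique minimiser, and since the dynamics of $Z^{\pm,k}$ are then fully determined by its initial value, uniqueness of the optimal sawtooth signature follows. The main obstacle is to formulate this optimisation \textbf{level by level} in a way that is compatible with the iterative integration by parts: one must verify that the level-$k$ minimisation decouples from the already-fixed constants at lower levels, which follows from the telescoping nature of the boundary terms. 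Finally, in the classical case $V=\bR$, $X_t=t$, the primitive relation reduces to $Q_k' = Q_{k-1}$ with $Q_1(t)=t-[t]+c_1$, and the level-by-level optimality forces $Q_k(t) = \tfrac{1}{k!}P_k(t-[t])$, so that $b^{\pm,*}_k = B_k^{\pm}/k!$ and \eqref{eq:EML_main_formula} collapses to \eqref{eq:clas_EML}.
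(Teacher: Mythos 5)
Your proposal follows essentially the same route as the paper: a Boas-style rewriting of the Riemann--Stieltjes sums as Stieltjes integrals, iterated integration by parts with the successive integration constants tracked as the initial tensors of the sawtooth primitives $Z^{\pm,k}$ (the paper's Theorem \ref{thm:prelEML}, proved for general $p$-controlled integrands), then a level-by-level quadratic minimisation as in Proposition \ref{prop_optimisation_P} giving a unique $b^{\pm,*}$ in the Hilbert case, and finally the specialisation to $X_t=t$ recovering the Bernoulli data. Only bookkeeping differs from the paper's execution: the integer-point jumps of $Z^{\pm,1}$ are handled there by extending $X$, $f(X)$ and integrating over an $a$-shifted window before letting $a\uparrow 1$ (resp. $a\downarrow 0$); the level-$k$ minimisation decouples because, by the triangular structure of \eqref{def_saw_signature2}, $\pi_k(Z^{\pm}_t(X,b))$ depends on $\pi_k(b)$ only through an additive shift (not by any ``telescoping of boundary terms''); the paper's convention puts the new increment on the left, $\diff X_t\otimes\pi_{k-1}(Z^{\pm}_t)$, to match how $D^{l-1}f$ is contracted; and with the signs fixed in \eqref{eq:EML_main_formula} the optimal tensors come out as $B^{\mp}_l/l!$ rather than your stated $B^{\pm}_l/l!$ (visible already at $l=1$).
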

\begin{proof}
    The theorem follows from    combining Theorem \ref{thm:prelEML} with $Y=f(X)$    with Definition \ref{defn_optimal}.
\end{proof}

\begin{remark}
According to the main identity \eqref{eq:EML_main_formula} the constants $b^{\pm,*}$    each have their interpretation in terms of backward and forward sums,  which is why we distinguish between positive and negative Bernoulli numbers and  treat them separately   throughout the paper, in spite of  a recent trend to suppress   negative Bernoulli numbers, see
 \cite{Bernou_manif}.
\end{remark}
\begin{remark}

The integrality of the end points $0$, $N$ of the domain of integration plays no special role here and  one could equally well consider, for any interval with  end points $a < b$ and partition $\pi$ thereof, integrals of the type
\[ \int_a^b f(X_t) d X_t  \quad \text{vs} \quad \sum_{[s, t] \in \pi} f(X_s)  (X_t - X_s) \, . \]
This can be trivially achieved by a time change. Moreover, our choice of equidistant  partition $\pi$ (given by integers) suggests that one might want to consider $f(X)$ and $X$  in the natural times scales and hence to
parametrise $X$ by the arclength
\[ t \mapsto \| X \|_{1 - \text{var} ; [0, t]}= \omega(t) . \]
The resulting time-changed path $\tilde{X}=X\circ \omega^{-1}$ is   Lipschitz continuous, hence also absolutely continuous, and runs, by construction, at constant speed $v$. 
Since length is unaffected by reparametrisation, we have \[ \left\| v \right\| (b - a) = \| X \|_{1 - \text{var} ; [a, b]},\]
which determines $v$,   an observation which  might simplify the computations in applications. 
\end{remark}

\begin{remark}
Summing the forward and backward formula of \eqref{eq:EML_main_formula} and dividing by $2$, it  further yields an expression for the trapezoidal Riemann sum
\[I_N^{1/2} (f, X)=  \frac{1}{2}\sum_{k = 0}^{N-1} (f (X_k)+ f(X_{k+1})) \delta X_k\,. \]
Unfortunately, in the full generality of Theorem \ref{main_thm_1}, considering such sums does not necessarily imply a simplification of \eqref{eq:EML_main_formula}. For the standard  trapezoid Riemann sums, one has a classical simplification due to the fact that $B_1^{+}=-B_1^{-}$.
\end{remark}

\begin{remark}
   Although formulated using stochastic processes, our result is essentially deterministic because it uses only trajectorial properties  of the underlying process $X$  from which we compute an average in Theorem \ref{prop_optimisation_P}. If  the underlying process $X$ is not rectifiable but it has  continuous trajectories with finite  $p$-variation with $1\leq p<2$, all the constructions related to the existence of integration by part formula and the existence of the sawtooth signature  literally extend to the context of Young integration \cite{Young1936}. However,  the optimisation procedure in Theorem \ref{prop_optimisation_P} cannot be applied and  the remainder  term  \eqref{eq:rem_EML} appearing in this context cannot be controlled using H\"older inequality but using instead  Young's maximal inequality.
\end{remark}

As a by-product of our generalised EML formula, we   revisit the classical EML formula showing how the Bernoulli numbers arise from an optimisation procedure. To to do,    for any sequence of numbers $b=(b_0\,,b_1\,, \ldots )$ we introduce the sequence of sawtooth functions $s^{\pm}_b= (s^{\pm,0}_b\,,s^{\pm,1}_b\,, \ldots)$
 with $s^{\pm,0}_b$ constant and equal to $1$.  For any  integer $l \geq 1$ we set
\begin{align*}  
s^{+,l}_b(t)&=\sum_{j=0}^{l}\frac{t^{l-j}}{(l-j)!}b_j  -  \sum_{k=0}^{[t]-1}\frac{(t-k-1)^{l-1}}{(l-1)!}\,, \\
s^{-,l}_b(t)&= \sum_{j=0}^{l}\frac{t^{l-j}}{(l-j)!}b_j - \sum_{k=0}^{[t]}\frac{(t-k)^{l-1}}{(l-1)!}\,.
\end{align*}

\begin{corollary}\label{cor:EMLoptimal}
The sequence of Bernoulli numbers $B=\{B_l^{\pm}\}_{l\geq 0}$ can be  equivalently defined in the following recursive way: set $B_0^{\pm}=1$, then  choose $B_1^{\pm}$ by
minimising respectively the quadratic functions
\[v\in \mathbb{R} \to\int_0^N \vert t-[t] - v\vert^2\mathrm{d} t\,, \quad v\in \mathbb{R} \to\int_0^N \vert t-[t+1] - v\vert^2\mathrm{d} t\,.\] 
Then, for any integer $l\geq 2$ the term  $B_{l}^{\pm}$ is obtained  by minimising the quadratic function
\[v\in \mathbb{R} \to\int_0^N \vert s^{\pm,l}_{B^{\pm,< l}}(t)+v\vert^2\mathrm{d} t\,, \] 
where $B^{\pm,< l}=(B_0^{\pm}\,,-B_1^{\pm} \ldots\,, B_{l-1}^{\pm}, 0, 0, \ldots )$.
\end{corollary}
\begin{proof}
    The theorem follows from Theorem \ref{prop_optimisation_P}, Remark \ref{rk_minus_sign} and Theorem \ref{thm:classical_bernoulli}.
\end{proof}
On the way to Theorem \ref{main_thm_1} we also obtain an explicit formula (Theorem \ref{thm:correction_signature2})
relating the \emph{signature} of $X$ (see \cite{Hambly2010-zu}), $S(X)\colon \Delta^2_{0,N}\to T_1((V))$,
given on each component $S(X)=(1,S^1(X), \ldots)$ by 
\begin{equation}\label{def_sign}
S^n_{s,t}(X)= \int_{\Delta_{s,t}^n} \diff X_{t_1}\otimes \ldots \otimes \diff X_{t_n}\,,
\end{equation}
 where we use the convention $\Delta_{s,t}^n=   \{(t_1,\ldots t_n)\in [0,N]^n \colon s<t_1<\ldots <t_n<t \}$,   with the corresponding  \emph{sum signature} 
 $\Sigma(X)\colon [[0, N]]^2_<\to T_1((V))$, defined from $[[0, N]]^2_<= \{(m,k)\in \{0,\ldots, N\}^2\colon m<k\}$ to each component $\Sigma(X)=(1,\Sigma^1(X), \ldots)$ by 
\begin{equation}\label{def_sum_sign}
\Sigma^n_{m,k}(X)= \sum_{m\leq i_1<\ldots <i_n<k}\delta X_{i_1}\otimes \ldots \otimes \delta X_{i_n}\,.
\end{equation}
 In this formula,  the sawtooth signature serves to interpolate  the   continuous and discrete signatures with the latter as  in equation \eqref{eq:discrete_signature}. In the finite dimensional setting, i.e. when $V=\mathbb{R}^d$ we show in  Theorem \ref{last_thm} that the identity \eqref{eq:discrete_signature}  provides an alternative way to compute the signature of the linear interpolation of the values of a time series  in term of the \emph{iterated sum signature}, see \cite{Tapia20, bellingeri2023discrete}. This equality is well-known in the literature, see \cite[Theorem 5.3]{Tapia20} and \cite[Corollary 5.3]{Amendola2019}.

\subsection*{Structure of the paper}

 The paper is organised as follows:  We first introduce two new families of non-linear functionals associated with $X$, the \emph{flip signature} Definition \ref{defn:flipsign} and the \emph{sawtooth signature} (equation \eqref{def_saw_signature1} which relates to the flip signature by Theorem \ref{EML_signature}. We then derive a purely deterministic identity \eqref{eq:pre_EML_main_formula} on rectifiable paths $X:[0, N]\to V$ using the sawtooth signature.  More precisely, for a  $m$-controlled integrand $Y$ of  $X$  (Definition \ref{defn:pcontrolledint}), we relate the Riemann-Stieltjes sums with its Stieltjes integral in Theorem \ref{thm:prelEML}. We then derive the main EML formula \eqref{eq:EML_main_formula}  from Theorem \ref{thm:prelEML}  by imposing an optimality criterion to choose the initial constant $b$ independently of the underlying integrand $Y$.   More precisely, we show that the operation of minimising the variance of each component in the remainder term  \eqref{eq:rem_pre_EML} leads to a natural family of constants that coincide with the classical Bernoulli number.  Finally, we conclude by deducing an interpolation formula between the sum signature \eqref{def_sum_sign} and the continuous signature \eqref{def_sign} by mixing the sawtooth signature with the discrete sum in  Theorem \ref{thm:correction_signature2}. This identity will provide us an alternative proof  to compute the signature of a piecewise linear path (see Theorem \ref{last_thm}).   
 
 We stress that this new approach to the Euler-Maclaurin formula could inspire several new connections to other branches of stochastic analysis, especially in the computation of cubature formulae on Wiener space \cite{Lyons2004,Bayer2012} where several examples of rectifiable processes $X$ are defined. More generally, we would like to extend the EML formula when the underlying path $X$ is a more general driver like a rough path or a semimartingale.

\bigskip

\noindent \textbf{Acknowledgements:}
CB gratefully acknowledges funding support from the European Research Council (ERC) through the ERC Starting Grant Low Regularity Dynamics via Decorated Trees (LoRDeT). He also acknowledges support of TU Berlin where he was employed when this project was started. PKF and SP acknowledge support from DFG CRC/TRR 388 Rough Analysis, Stochastic Dynamics and Related Fields", Project A05. The authors thank Sichuan University in Chengdu where discussions around the paper took place during the conference ``Algebraic, analytic, geometric structures emerging from quantum field theory'' in March 2024.
\bigskip

\noindent \textbf{Notations:} All along the paper we will work with a fixed real Banach space $V$. Tensor powers $(V^{\otimes n})_{n \geq 1} $ of $V$  are taken starting from an admissible norm over $V^{\otimes n}$, see  e.g. \cite[Definition 1.25]{lyons2007differential} which  satisfies the continuity property 
\[\Vert v\otimes w\Vert_{V\otimes V}\leq \Vert v\Vert_{V} \Vert  w\Vert_{V}\]
and the invariance identity	
\[ \Vert v_{\sigma(1)}\otimes \cdots \otimes v_{\sigma(n)}\Vert_{V{\otimes n}}=\Vert v_1\otimes \cdots \otimes v_n\Vert_{V{\otimes n}}\,\]
for any permutation $\sigma\in S_n$. A standard example of an  admissible norm is the projective tensor product, see \cite[Theorem 2.9]{Ryan02}.  We denote by $T((V))$ the space of formal tensor series over $V$ and $T^m(V)$ the truncated tensor algebra up to degree $m$ for any integer $m \geq 0 $. The operation $\otimes$  naturally  turns $T ((V))$   into an algebra. Projections on $V^{\otimes n}$ will be denoted by $\pi_n$ and we will use the notations $T_1 ((V))$  (resp. $ T_1^p (V)$) to denote the subspaces of elements $v\in T ((V)) $ (resp. $v\in T^p (V)$) such that $\pi_0(v)=1$. The unity of the tensor product will be denoted by $\mathbf{1}$. This algebra equipped with the operation $\otimes$  is a group with inverse operation denoted by $v^{-1}$.
\section{Sawtooth and flip signature of a path}
 We introduce two new families of non-linear functionals associated to a rectifiable path $X \colon [0, N] \rightarrow V$. Both of them will look similar to the signature $S(X)$ for which the usual differential characterisation
\begin{equation*}
	\begin{cases}
	    d S_{0,t} (X)=  S_{0,t} (X)\otimes\diff X_t\\
     S_{0,0}(X)=\mathbf{1}.
	\end{cases}
	\end{equation*}
holds, see \cite{Hambly2010-zu}.  However, as it will be clear in section \ref{sec:3},  to prove Theorem \ref{main_thm_1} we will need to consider tensor-valued paths that are characterised by a similar relationship where the term $dX_t$ is placed to the left instead of the right.

\begin{definition}\label{defn:flipsign}
For any  rectifiable path $X \colon [0, N] \rightarrow V$, we define  the \emph{flip-signature} $ S^{\flat}(X)\colon \Delta^2_{0,N}\to T_1((V))$  of $X$ by 
\begin{equation} \label{eq:flipsign}
\pi_n S^{\flat}_{s,t}(X)= \int_{\Delta_{s,t}^n} \diff X_{t_n}\otimes \ldots \otimes \diff X_{t_1}\,. \end{equation} 
\end{definition}
	
	\begin{proposition}
 The \emph{flip signature} is characterised as the unique solution of the tensor valued differential equation
	\begin{equation}\label{eq:flip_Ode}
	\begin{cases}
	    d S^{\flat}_{0,t} (X)= \diff X_t\otimes S^{\flat}_{0,t} (X)\\
     S^{\flat}_{0,0}(X)=\mathbf{1}.
	\end{cases}
	\end{equation}		
Moreover we have $S^{\flat}_{s,t} (X)= \Gamma {S}_{s,t} (X)^{-1} $ where $ \Gamma\colon T((V))\to T((V))$ is the linear involution map defined on any $v\in T((V))$ by  $
\pi_n(\Gamma v)=(-1)^n \pi_n(v)\,.$
\end{proposition}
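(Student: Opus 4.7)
The plan is to establish both assertions by reducing them to uniqueness of ODEs in the tensor algebra $T((V))$.

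First I would verify the ODE characterisation by unpacking the definition \eqref{eq:flipsign} level by level. In the chamber $\Delta^n_{0,t}$ the \emph{largest} integration variable is $t_n$, so Fubini gives the recursion
\[ \pi_n S^{\flat}_{0,t}(X)=\int_0^t dX_{t_n}\otimes \pi_{n-1} S^{\flat}_{0,t_n}(X), \]
and differentiating in $t$ yields $d\,\pi_n S^{\flat}_{0,t}(X)=dX_t\otimes \pi_{n-1}S^{\flat}_{0,t}(X)$. Summing over $n\ge 1$ and noting $\pi_0 S^{\flat}_{0,t}(X)=1$ gives \eqref{eq:flip_Ode}. Uniqueness of the solution is standard, since the driver is rectifiable: at each truncation level $T_1^N(V)$ the equation is an ODE with Lipschitz coefficients whose solution is determined by the initial condition, and compatibility of truncations yields a unique solution in $T_1((V))$.

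For the second assertion, the strategy is to show that $t\mapsto \Gamma S_{0,t}(X)^{-1}$ satisfies the same ODE \eqref{eq:flip_Ode} with the same initial datum, and conclude by uniqueness. The starting point is that $S_{0,t}(X)$ obeys the standard right-invariant signature ODE $dS_{0,t}(X)=S_{0,t}(X)\otimes dX_t$, from which one deduces in the usual way (from $S\otimes S^{-1}=\mathbf{1}$) that
\[ d\bigl(S_{0,t}(X)^{-1}\bigr)=-dX_t\otimes S_{0,t}(X)^{-1},\qquad S_{0,0}(X)^{-1}=\mathbf{1}. \]
The key algebraic observation is that $\Gamma$ is an algebra homomorphism of $(T((V)),\otimes)$: for homogeneous elements of degrees $p,q$ one has $\Gamma(v\otimes w)=(-1)^{p+q}v\otimes w=\Gamma(v)\otimes\Gamma(w)$, and linearity extends this to all of $T((V))$. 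Applying $\Gamma$ to the above ODE and using that $dX_t$ is of degree one, so $\Gamma(dX_t)=-dX_t$, produces
\[ d\bigl(\Gamma S_{0,t}(X)^{-1}\bigr)=-\Gamma(dX_t)\otimes \Gamma S_{0,t}(X)^{-1}=dX_t\otimes \Gamma S_{0,t}(X)^{-1}, \]
with $\Gamma(\mathbf{1})=\mathbf{1}$. This is precisely \eqref{eq:flip_Ode}, so uniqueness forces $S^{\flat}_{0,t}(X)=\Gamma S_{0,t}(X)^{-1}$. The two-point version $S^{\flat}_{s,t}(X)=\Gamma S_{s,t}(X)^{-1}$ follows by applying the result to the shifted path $u\mapsto X_{s+u}$, since both sides of \eqref{eq:flipsign} and of the desired identity depend only on the increments of $X$ on $[s,t]$.

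I do not expect a genuine obstacle: the only delicate point is bookkeeping the degree shifts that make $\Gamma$ multiplicative and turn the minus sign from the inverse ODE into a plus sign compatible with the flip ODE. A sanity check at level one, where $\pi_1 S^{\flat}_{s,t}(X)=X_t-X_s=-\pi_1 S_{s,t}(X)^{-1}=\pi_1(\Gamma S_{s,t}(X)^{-1})$, already exhibits the mechanism at play.
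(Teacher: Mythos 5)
Your proof is correct, but it runs in the opposite direction from the paper's. The paper first proves the algebraic identity $S^{\flat}_{s,t}(X)=\Gamma S_{s,t}(X)^{-1}$ directly: a change of variables turns the flip integral into an integral over the reversed simplex, which is recognised as $(-1)^n$ times the signature of the time-reversed path $\overleftarrow{X}_u=X_{s+t-u}$, and Chen's identity $S_{s,t}(\overleftarrow{X})=S_{s,t}(X)^{-1}$ then gives the formula; the ODE \eqref{eq:flip_Ode} is deduced afterwards from the resulting Chen-type relation for $S^{\flat}$. You instead establish the ODE first, straight from the definition by Fubini (peeling off the largest time $t_n$, which is exactly the leftmost factor $dX_{t_n}$), note uniqueness from the triangular/Lipschitz structure level by level, and then obtain the algebraic identity by checking that $\Gamma S_{0,\cdot}(X)^{-1}$ solves the same initial value problem, using that $\Gamma$ is an algebra automorphism of $(T((V)),\otimes)$ together with the elementary inverse-signature equation $d\bigl(S_{0,t}(X)^{-1}\bigr)=-dX_t\otimes S_{0,t}(X)^{-1}$; the two-point statement then follows by shifting, since both sides only see the increments of $X$ on $[s,t]$. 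Both arguments are sound; the paper's route gives a transparent pathwise interpretation (time reversal) and yields the Chen relation for $S^{\flat}$ as a byproduct, while yours is more self-contained, needing only the standard signature ODE and uniqueness rather than the time-reversal form of Chen's identity, and it proves the two claims in the order in which the proposition states them. The degree bookkeeping you flag (multiplicativity of $\Gamma$ and $\Gamma(dX_t)=-dX_t$ converting the sign) is handled correctly.
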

 \begin{remark}
Note that the equation \eqref{eq:flip_Ode}  is obtained from the usual equation  satisfied by the  signature $S(X)$ by flipping both sides, hence the name flip signature.
 \end{remark}
	\begin{proof} 
We first prove the algebraic identity among flipped and inverse signature. Performing a standard substitution in the integration domain we write 
\[ \pi_n S^{\flat}_{s,t}(X)=\int_{\Delta_{s,t}^{n,*}}\diff X_{t_1}\otimes \cdots \otimes \diff X_{t_n},\]
where $\Delta_{s,t}^{n,*}= \{(t_1,\ldots t_n)\in [0,N]^n\colon s<t_n<\ldots <t_1<t \}$ is the reverse simplex. Introducing  the time reverse path $  \overleftarrow{X}\colon [s,t]\to V $ given by  $\overleftarrow{X}_{u}=X_{s+t-u} $ we have 
\[\pi_n S^{\flat}_{s,t}(X)=(-1)^n\int_{\Delta_{s,t}^n}d\overleftarrow{X}_{t_1}\otimes \cdots  \otimes \diff \overleftarrow{X}_{t_n}= (-1)^nS_{s,t} (\overleftarrow{X})\,.\]
Since $S_{s,t} (\overleftarrow{X})= S_{s,t} (X)^{-1}$ as consequence of the Chen identity \cite [Theorem 2.9]{Chen54}, we obtain the desired identity. Passing to  equation \eqref{eq:flip_Ode}, one obtains it starting from Chen's relation $S_{s,t} (X)= S_{s,u} (X)\otimes S_{u,t} (X)$, then after inversion and multiplication by $\Gamma$ we derive $S^{\flat}_{s,t} (X)= S_{u,t}^{\flat} (X)\otimes  S^{\flat}_{s,u}(X)$ and the equation \eqref{eq:flip_Ode} arises when we take $s=0$, we divide by $t-u$ and we take the limit $u\to t$.
\end{proof}

\begin{example}
 When $V =\mathbb{R}$,  the tensor product $\otimes$ coincides with the usual product of real numbers and  we  recover the flip signature of any path $X\colon [0,N]\to \mathbb{R}$  via the classical formula 
\begin{equation}\label{eq:classicalflipsign}\pi_l(S^{\flat}_{s,t}(X))=\frac{(X_t-X_s)^{l}}{l!}\,,
\end{equation}
which for $X (t) = t$,  reads 
\[\pi_l(S^{\flat}_{s,t}(X))=\frac{(t-s)^{l}}{l!}\,.\] 
More generally, when $V=\bR^d$, we can compute each component of $S^{\flat}(X)$ via the straightforward identity
\[\langle S^{\flat}_{s,t}(X), i_1 \ldots i_n\rangle =\langle S_{s,t}(X), i_n\ldots i_1\rangle, \]
where we use the identification  
\[e_{i_1}\otimes \ldots \otimes e_{i_n}= i_1\ldots i_n.\]
Here $e_i, i=1, \cdots, d$ is the canonical basis of $\bR^d$ and the bracket $\langle\,,\rangle$ is the extension to $T((\mathbb{R}^d))$ of the standard scalar product of $\mathbb{R}^d$.
\end{example}
We now introduce the  key ingredient to describe the generalised EML formula.
\begin{definition}
For any given rectifiable path   $X\colon [0,N]\to V$ and $b\in T_1((V)) $ we consider the paths $Z^{\pm}(X, b)\colon [0,N]\to T_1((V))$ given by the first order condition
\begin{equation}\label{def_saw_signature1} 
\pi_1(Z^{+}_t(X,b))=X_t-X_{[t]}+ \pi_1(b)\,,\quad \pi_1(Z^{-}_t(X,b))=X_t-X_{[t+1]}+\pi_1(b)
\end{equation}
and   whose  higher order components for any integer $l\geq 2$ satisfy the differential equations
\begin{equation}\label{def_saw_signature2}
\begin{cases}
  d\pi_{l}(Z^{\pm}_t(X,b))=dX_t\otimes\pi_{l-1}(Z^{\pm}_t(X,b))   \\
 \pi_l(Z^{\pm}_0(X,b))= \pi_k(b)
\end{cases}
\end{equation}
We call the paths $Z^{\pm}(X, b)$ the \emph{forward} and  \emph{backward sawtooth signature} with initial datum $b$. When $b=\mathbf{1}$, we  l use the shorthand notation $Z^{\pm}(X,\mathbf{1})=  Z^{\pm}(X)$ and simply call it the \emph{forward} and \emph{backward sawtooth signature}. If $b\in T^m_1(V)$ we  call the projection of $Z^{\pm}_t(X,b )$ on $T^m_1(V)$ the \emph{truncated forward and backward sawtooth signature with initial datum} $b$, denoting it in the same way.
\end{definition}

A  similar procedure applied to the   differential relation  \eqref{eq:flip_Ode},
 whose structure is similar to that of \eqref{def_saw_signature2}, yields an  identity between the sawtooth and the flip-signature.
	
\begin{theorem}\label{EML_signature}
For any bounded variation  path $X$  and  $b\in T_1((V))$ we have
\begin{align}
Z^{+}_t(X,b)
&= S^{\flat}_{0,t}(X)\otimes b-\sum_{k=0}^{[t]-1}{S}^{\flat}_{k+1,t} (X)\otimes \delta X_k\,,
\label{thm_saw_signature_1}\\ Z^{-}_t(X,b)&=S^{\flat}_{0,t}(X)\otimes b-\sum_{k=0}^{[t]}{S}^{\flat}_{k,t}(X)\otimes\delta X_k\,. \label{thm_saw_signature_2}.
\end{align}
\end{theorem}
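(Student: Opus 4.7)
The strategy is to show that the right-hand sides of \eqref{thm_saw_signature_1} and \eqref{thm_saw_signature_2}, which I temporarily denote $\tilde{Z}^{\pm}_t$, satisfy the characterising system \eqref{def_saw_signature1}--\eqref{def_saw_signature2}. Equality then follows from uniqueness, which holds degree by degree because the equation for $\pi_k$ is a Stieltjes integral equation driven by the continuous $X$ with source $\pi_{k-1}$ already determined. Three items must be checked: (a) $\pi_1(\tilde{Z}^{\pm}_t)$ coincides with the piecewise formula \eqref{def_saw_signature1}; (b) $\pi_k(\tilde{Z}^{\pm}_0)=\pi_k(b)$ for all $k\geq 2$; and (c) on every open interval between consecutive integers one has $d\pi_k\tilde{Z}^{\pm}_t=dX_t\otimes \pi_{k-1}\tilde{Z}^{\pm}_t$ for $k\geq 2$, together with continuity across integer times.

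Items (a) and (b) are short. Since $\Delta X_k\in V$ sits in tensor degree one and $\pi_0 S^{\flat}=1$, one has $\pi_1(S^{\flat}_{k+1,t}\otimes \Delta X_k)=\Delta X_k$, and a telescoping sum yields
\[
\pi_1(\tilde{Z}^{+}_t)=(X_t-X_0)+\pi_1(b)-(X_{[t]}-X_0)=X_t-X_{[t]}+\pi_1(b),
\]
with an analogous computation for $\tilde{Z}^{-}$ using $\sum_{k=0}^{[t]}\Delta X_k=X_{[t]+1}-X_0=X_{[t+1]}-X_0$. At $t=0$ the sum defining $\tilde{Z}^{+}$ is empty and $S^{\flat}_{0,0}=\mathbf{1}$, so $\tilde{Z}^{+}_0=b$; for $\tilde{Z}^{-}$ the sole remaining summand $\mathbf{1}\otimes \Delta X_0$ sits in degree one and therefore does not affect $\pi_k$ for $k\geq 2$.

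For (c), fix an interval $(n,n+1)$ on which the floor $[t]$ is constant, so the upper limit of each sum is frozen and I may differentiate termwise. Using $d_t S^{\flat}_{s,t}(X)=dX_t\otimes S^{\flat}_{s,t}(X)$ from \eqref{eq:flip_Ode}, every summand of the form $S^{\flat}_{s,t}\otimes c$ satisfies $d_t(S^{\flat}_{s,t}\otimes c)=dX_t\otimes(S^{\flat}_{s,t}\otimes c)$, and hence $d_t\tilde{Z}^{\pm}_t=dX_t\otimes\tilde{Z}^{\pm}_t$ on $(n,n+1)$; projecting onto $\pi_k$ yields the required ODE for each $k\geq 2$. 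Across an integer time $t=m$ the only change is that the upper limit of the sum advances by one, inserting (resp.\ removing) a term $S^{\flat}_{m,m}\otimes \Delta X_{\bullet}=\mathbf{1}\otimes \Delta X_{\bullet}$; this element lives purely in degrees zero and one, so $\pi_k\tilde{Z}^{\pm}$ stays continuous for $k\geq 2$, in agreement with the absolute continuity dictated by the ODE, while the $\pi_1$ discontinuity reproduces exactly the reset built into \eqref{def_saw_signature1}.

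The main (modest) obstacle is precisely this bookkeeping at the integer break points: one must confirm that the jumps implicitly present in the candidate formulas match the piecewise definition of $\pi_1$ and do not contaminate the higher tensor components through the $\otimes$-product. Once these jump calculations are settled, the argument reduces to the verified ODE plus matching initial values, and the two identities \eqref{thm_saw_signature_1}--\eqref{thm_saw_signature_2} follow.
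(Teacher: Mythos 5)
Your proof is correct, but it is organised in the reverse direction from the paper's. You take the closed-form right-hand sides as candidates and verify that they satisfy the defining system \eqref{def_saw_signature1}--\eqref{def_saw_signature2} for arbitrary $b$ in one pass: the level-one telescoping, the initial values, the ODE on each interval $(n,n+1)$ obtained by termwise differentiation via $d_tS^{\flat}_{s,t}=\diff X_t\otimes S^{\flat}_{s,t}$ (which, note, you use for a general lower limit $s$, not just $s=0$ as stated in \eqref{eq:flip_Ode}; this is legitimate and the paper's own proof uses the same fact in integral form), and the observation that the term inserted at an integer crossing is $\mathbf{1}\otimes\Delta X_{\bullet}$ and so cannot perturb the components of degree $\ge 2$; uniqueness is then immediate since each $\pi_k$ is determined explicitly from $\pi_{k-1}$ by a Stieltjes integral against the continuous (hence atomless) $X$. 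The paper instead works forward from the definition: it proves the identities for $b=\mathbf{1}$ by induction on the tensor degree, plugging the induction hypothesis into $\pi_l(Z_t)=\int_0^t\diff X_s\otimes\pi_{l-1}(Z_s)$ and collapsing the resulting integrals with the flip-signature relation, and then handles general $b$ by a separate comparison argument, showing that $Z^{-}_t(X,b)-Z^{-}_t(X)+S^{\flat}_{0,t}(X)$ solves the same equation as $S^{\flat}_{0,t}(X)\otimes b$. Your route treats all $b$ uniformly and avoids the degree induction, at the price of the piecewise/jump bookkeeping at integer times; the paper's route never has to discuss discontinuities because it stays in integral form throughout, at the price of an extra step to pass from $b=\mathbf{1}$ to general $b$. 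Both hinge on the same two ingredients, the flip-signature ODE and a uniqueness statement, so the difference is one of architecture rather than substance.
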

\begin{proof}
We first prove the identities \eqref{thm_saw_signature_1} and \eqref{thm_saw_signature_2}  for the case $b=\mathbf{1}$. Projecting onto each component of $T_1((V))$, this amounts to showing that for any $l\geq 1 $ one has 
\begin{align}
\pi_l(Z^{+}_t(X,b))
&= \pi_l({S_{0,t}^{\flat}} (X))-\sum_{k=0}^{[t]-1}\pi_{l-1}({S}^{\flat}_{k+1,t} (X))\otimes \delta X_k\,,\label{proof_saw_signature_1}\\ \pi_l(Z^{-}_t(X))&= \pi_l({S_{0,t}^{\flat}} (X))-\sum_{k=0}^{[t]}\pi_{l-1}(S^{\flat}_{k,t}(X))\otimes \delta X_k\,,\label{proof_saw_signature_2}
\end{align}
for any $ t\in[0, N]$.  We  show these identities by induction on $l\geq 1$, focusing only on $Z^{-}$ for the sake of conciseness. 
		
Since by assumption  $\pi_0({S_{0,t}^{\flat}}(X))=1$ and $\pi_1({S _{0,t}^{\flat}}(X))=X_t-X_0$,   the case $l=1$ reads:
\begin{align*}
&\pi_1(S^{\flat}_{0,t}(X))-\sum_{k=0}^{[t]}\pi_{0}({S_{k,t}^{\flat}} (X))\otimes\delta X_k\\&= X_t- X_0-\sum_{k=0}^{[t]}\delta X_k = X_t- X_0- (X_{[t]+1}-X_0 )= \pi_1(Z^-_t(X)) \,,
\end{align*}
where we have used (\ref{def_saw_signature2}), thereby showing the  initial  induction step. For a generic $l>1$,  by definition of $Z^{-}_t(X)$ and using  the recursive assumption one has  
\begin{align*}
\pi_l (Z^{-}_t(X))&= \int_0^t  \diff X_s\otimes\pi_{l-1}(Z^{-}_s(X))\\&=\int_0^t  \diff X_s\otimes \pi_{l-1}({S_{0,s}^{\flat}} (X))-\int_0^t \sum_{k=0}^{[s]} \diff X_s\otimes\pi_{l-2}({S_{k,s}^{\flat}} (X))\otimes\delta X_k.
\end{align*}
Using the characterisation \refeq{eq:flip_Ode} of the flip signature, and exchanging the sum with the integral yields
\begin{align*}
\pi_l (Z^{-}_t(X))=&\pi_{l}({S^{\flat}_{0,t}} (X))-\sum_{k=0}^{[t]}\int_{k}^t \diff X_s\otimes\pi_{l-2}({S_{k,s}^{\flat}}(X))\otimes\delta X_k\\&=\pi_{l}({S^{\flat}_{0,t}} (X))-\sum_{k=0}^{[t]}\pi_{l-1}({S_{k,t}^{\flat}} (X))\otimes\delta X_k, 
\end{align*}
thereby closing the induction. 
		
For general $b$, we prove the identities  by showing that the path 
\[Y_t= Z^{-}_t(X,b)-Z^{-}_t(X)+S^{\flat}_{0,t}(X)\]
solves the same differential equation \eqref{eq:flip_Ode} as $S^{\flat}_{0,t}(X)$ with initial datum $b$. Indeed by uniqueness of the Ode \eqref{eq:flip_Ode} we  obtain
\[Z^{-}_t(X,b)-Z^{-}_t(X)+S^{\flat}_{0,t}(X)= (S^{\flat}_{0,t}(X)\otimes b) \]
from which the result will then follow.   Since on any component of $T(V)$ of order $k>1$ one has 
\begin{align}\label{eq_check_proof}
d\pi_{k+1}(Y_t)=\diff X_t\otimes\pi_k(Y_t)   \,,
\end{align}
and $Y_0=b$, it is sufficient to check \eqref{eq_check_proof} in the case $k=\mathbf{1}$ to conclude. But in this case we have
\[\pi_1(Z^{-}_t(X,b)-Z^{-}_t(X)+S^{\flat}_{0,t}(X))=X_t-X_0+ \pi_1(b)=\pi_1(S^{\flat}_{0,t}(X)\otimes b )\,.\]
\end{proof}

	\begin{example} When $V =\mathbb{R}$,   the flip signature of any path $X\colon [0,N]\to \mathbb{R}$ is given by \eqref{eq:classicalflipsign}, so that  for any initial datum $b\in T_1((\bR))$, the formulae of Theorem  \ref{EML_signature} become at each order
		\begin{align*}
  \pi_l(Z^{+}_t(X, b))&= \sum_{j=0}^{l}\frac{(X_t-X_0)^{l-j}}{(l-j)!}\pi_j(b)-\sum_{k=0}^{[t]-1}\frac{(X_t-X_{k+1})^{l-1}\delta X_k }{(l-1)!}\,
,\\	\pi_l(Z^{-}_t(X,b))&= \sum_{j=0}^{l}\frac{(X_t-X_0)^{l-j}}{(l-j)!}\pi_j(b)-  \sum_{k=0}^{[t]}\frac{(X_t-X_k)^{l-1}\delta X_k }{(l-1)!}\,.
		\end{align*}
		When $X_t = \lambda t$ for some $\lambda\in \mathbb{R}$ and $V= \mathbb{R}$, then $\delta X_k =\lambda$  for all $k$ and   one  has the simpler  expressions

\begin{align}  \label{expl_saw_sign1}
\pi_l(Z^{+}_t(\lambda t,b))&=\sum_{j=0}^{l}\frac{\lambda^{l-j}t^{l-j}}{(l-j)!}\pi_j(b)  -  \lambda^{l}\sum_{k=0}^{[t]-1}\frac{(t-k-1)^{l-1}}{(l-1)!}\,, \\
\label{expl_saw_sign2}
\pi_l(Z^{-}_t(\lambda  t,b))&= \sum_{j=0}^{l}\frac{\lambda^{l-j}t^{l-j}}{(l-j)!}\pi_j(b) - \lambda^{l} \sum_{k=0}^{[t]}\frac{(t-k)^{l-1}}{(l-1)!}\,.
\end{align}
With the help of the Faulhalber formula \ref{Faul1}, and the condition $b\in T_1((V))$ we write 
\begin{align}  \label{Bernoulli_expl_saw_sign1}
\pi_l(Z^{+}_N(\lambda t,b))&
= \sum_{j=1}^{l}\frac{\lambda^{l-j}N^{l-j}}{(l-j)!}\pi_j(b)- \lambda ^l\sum_{j=1}^{l-1}\frac{N^{l-j}}{(l-j)!}\frac{B^{-}_j}{j!}\\ 
\label{Bernoulli_expl_saw_sign2}
\pi_l(Z^{-}_N(\lambda N,b))&= \sum_{j=1}^{l}\frac{\lambda^{l-j}N^{l-j}}{(l-j)!}\pi_j(b) -\lambda ^l\left(\sum_{j=1}^{l-1}\frac{N^{l-j}}{(l-j)!}\frac{B^{+}_j}{j!}+ \delta_{l1}\right)\,,
\end{align}
where $\delta_{l1}$ is the Kronecker delta.
\end{example}
\section{A preliminary deterministic EML formula}\label{sec:3}
We use the sawtooth signature to derive a first deterministic identity on rectifiable paths $X\colon [0,N]\to V$  where no optimisation procedure is involved. In order to develop a framework which comprises the two main theorems, we introduce a class of paths which generalise  those of the form $f(X_t)$. 
\begin{definition}\label{defn:pcontrolledint} Let    $m\geq 1$ an integer. A rectifiable   path $Y\colon [0,N]\to  L(V,W)$ is said to be a \emph{$m$-controlled integrand of $X$}  if there exist $m$ rectifiable paths $Y^{1}, \ldots, Y^{m}$ such that for all $l=0,1, \ldots, m$, one has $ Y^{l}\colon[0,T]\to L(V^{\otimes (l+1)},W)$ and it satisfies
\begin{equation}\label{eq_def_p_controlled}
dY^{l}_t=Y^{l+1}_t\,dX_t\,,
\end{equation}
where we use the notation $Y^0=Y$. 
\end{definition}

\begin{remark}
 The notion of $m$-controlled integrand of $X$ enables us to deal with more general paths in that we can replace  the expression $Y_t=f(X_t)$ arising in the EML formula \eqref{eq:EML_main_formula} with $f\in C^m(V,L(V,W))$  a  $m$-controlled integrand of $X$ by  the more general expression  $Y_t^l=D^{l}f(X_t)$. 
\end{remark}

\begin{example} A typical class of $m$-controlled integrands is given by controlled differential equation driven by rectifiable processes. Given a triple $U, V, W$  of Banach spaces, every path of the form
$Y_t=f(\tilde{X}_t)$ with $f\colon U\to L(V,W)$ of class $C^m$  and $\tilde{X}_t\in U$ solution of a controlled differential equation
\[d\tilde{X}_t= g(\tilde{X}_t)\diff X_t\]
for some smooth function $g\colon U\to L(V, U)$,  is a  $p$-controlled integrand up to an initial datum. The computation of the functions  $Y^{1}, \ldots, Y^{m}$ in this setting will involve Faà di Bruno's formula in the context of Fréchet calculus.
\end{example}
The Stieltjes  integral
 \[\int_0^NY_t \diff X_t\]
 is  well-posed for any  $p$-controlled integrand of  $X$.  As before, we also introduce the backward  and forward Riemann-Stieltjes sums of $p$-controlled integrands
	\[I^{-}_N(Y, X)= \sum_{k = 0}^{N-1} Y_k \delta X_k  \,,\quad I^{+}_N(Y, X)=\sum_{k = 0}^{N-1} Y_{k+1}\delta X_k\,. \]
where as before, we use  the notation $\delta X_k := X_{k+1} -X_k$. Combining the notions of controlled integrand with the sawtooth signature, we obtain the first part of Theorem \ref{main_thm_1} without the the choice of the optimal constant.
 
	\begin{theorem} \label{thm:prelEML}[Preliminary ELM formula]
 Given a rectifiable path $X$  and a path $Y$ which is a $m$-controlled integrand for $X$ for some integer $m\geq 1$, we have the  following identity for any $b\in T_1^m(V)$ 
\begin{equation}\label{eq:pre_EML_main_formula}
I_N^{\pm} (Y, X) = \int_0^NY_t\diff X_t-[Y_t\pi_1(b)]_{t=0}^N +\sum_{l = 2}^{m}(- 1)^{l}[Y^{l-1}_t \pi_l(Z^{\pm}_t(X,b))]_{t=0}^N+ R^{m}(b)\,,
\end{equation}
where the remainder $R^{m}(b)$ is given by the Stieltjes integral 
\begin{equation}\label{eq:rem_pre_EML}
    R^{m}(b)=(- 1)^{m+1}\int_{0}^{N}  Y^{m}_t  \mathrm{d} \pi_{m+1}(Z^{\pm}_t(X,b))\,.
\end{equation}
\end{theorem}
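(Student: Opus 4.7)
The plan is to first express each Riemann-Stieltjes sum as a Stieltjes integral against $\pi_1(Z^\pm(X,b))$, then to iterate integration-by-parts by exploiting the ODE $\diff\pi_{l+1}(Z^\pm_t) = \diff X_t \otimes \pi_l(Z^\pm_t)$ (valid for every $l \geq 1$) combined with the controlled-integrand relation $\diff Y^{l-1}_t = Y^l_t\,\diff X_t$ from Definition \ref{defn:pcontrolledint}.

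In the forward case, $\pi_1(Z^+_t) = X_t - X_{[t]} + \pi_1(b)$ is a c\`adl\`ag BV function whose continuous part is $X_t$ and whose atoms of size $-\Delta X_k$ sit at $t = k+1$, so integrating $Y_t$ against $\diff\pi_1(Z^+_t)$ recovers the forward Riemann sum, giving
\[
I_N^+(Y, X) = \int_0^N Y_t\, \diff X_t - \int_0^N Y_t\, \diff\pi_1(Z^+_t(X,b)).
\]
For the backward case, a similar decomposition of $\pi_1(Z^-_t) = X_t - X_{[t+1]} + \pi_1(b)$ (whose atoms lie at $t = 1, \ldots, N-1$ with sizes $-\Delta X_k$) produces
\[
I_N^-(Y, X) = \int_0^N Y_t\, \diff X_t - \int_0^N Y_t\, \diff\pi_1(Z^-_t(X,b)) + Y_0\Delta X_0,
\]
the correction term reflecting the non-trivial initial value $\pi_1(Z^-_0) = \pi_1(b) - \Delta X_0$.

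Stieltjes integration by parts applied to the continuous rectifiable path $Y^{l-1}$ and the c\`adl\`ag BV path $\pi_l(Z^\pm)$ yields, for every $l \geq 1$,
\[
\int_0^N Y^{l-1}_t\, \diff\pi_l(Z^\pm_t) = [Y^{l-1}_t \pi_l(Z^\pm_t)]_0^N - \int_0^N Y^l_t\, \diff\pi_{l+1}(Z^\pm_t),
\]
where the right-hand side arises by substituting $\diff Y^{l-1}_t = Y^l_t \diff X_t$ and then $\diff X_t \otimes \pi_l(Z^\pm_t) = \diff\pi_{l+1}(Z^\pm_t)$ (valid since $l+1 \geq 2$). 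For $l=1$ in the backward case, the boundary value $\pi_1(Z^-_0) = \pi_1(b) - \Delta X_0$ produces an extra $Y_0 \Delta X_0$ which cancels precisely the correction introduced in Step 1; in the forward case the identities $[0]=0$ and $[N]=N$ force $\pi_1(Z^+_0) = \pi_1(Z^+_N) = \pi_1(b)$, so no correction arises.

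Iterating from $l = 1$ to $l = m$ then produces the alternating telescoping sum $\sum_{l=1}^m (-1)^l [Y^{l-1}_t\pi_l(Z^\pm_t)]_0^N$ together with the remainder $(-1)^{m+1}\int_0^N Y^m_t\, \diff\pi_{m+1}(Z^\pm_t)$, reproducing \eqref{eq:pre_EML_main_formula} once the $l=1$ boundary term is collapsed to $[Y_t\pi_1(b)]_0^N$ as above. The iterations for $l \geq 2$ are routine because each $\pi_l(Z^\pm)$ is then continuous (driven by the non-atomic measure $\diff X_t$); the only real subtlety, entirely concentrated at $l=1$, is the careful bookkeeping of atoms and endpoint values in the backward case, whose artificial $Y_0\Delta X_0$ correction ultimately disappears against the IBP boundary contribution.
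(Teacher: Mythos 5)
Your proof is correct, and its backbone is the same as the paper's: write the difference between the Riemann--Stieltjes sum and the integral as an integral against $\pi_1(Z^{\pm}(X,b))$, then iterate integration by parts using $\diff Y^{k}_t=Y^{k+1}_t\diff X_t$ together with $\diff\pi_{k+1}(Z^{\pm}_t)=\diff X_t\otimes\pi_k(Z^{\pm}_t)$, the boundary terms collapsing to $[Y_t\pi_1(b)]_{t=0}^N$ at the first level because $\pi_1(Z^{\pm})$ equals $\pi_1(b)$ at the relevant endpoints. Where you diverge is in how the sum is turned into an integral and how the jumps at integers are handled. The paper follows Boas: it represents $I_N^{\pm}(Y,X)$ as $\int Y_t\,\diff X_{[t]}$ over the shifted window $[1-a,N+1-a]$ (resp.\ $\int Y_t\,\diff X_{[t+1]}$ over $[-a,N-a]$), as in \eqref{eq_discrete_to_continuous} and \eqref{eq_discrete_to_continuous_2}, so that all atoms are strictly interior and no endpoint conventions are needed, and only at the end sends $a\uparrow 1$ (resp.\ $a\downarrow 0$) using continuity of $Y^k$ and $\pi_k(Z^{\pm})$. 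You instead work directly on $[0,N]$ with the c\`adl\`ag path $\pi_1(Z^{\pm})$, which buys a limit-free argument at the price of convention bookkeeping: your forward identity requires the atom $-\Delta X_{N-1}$ at $t=N$ to be counted (i.e.\ Lebesgue--Stieltjes integration over $(0,N]$), and your use of $\pi_1(Z^-_N)=\pi_1(b)$ tacitly invokes the constant extension of $X$ beyond $N$ (equivalently a convention for $X_{[N+1]}$ at $t=N$) --- exactly the extension the paper makes explicit at the start of its proof; with these conventions stated, your cancellation of the $Y_0\Delta X_0$ correction against the $l=1$ boundary term in the backward case is exactly right, and the observation that $\pi_l(Z^{\pm})$ is continuous for $l\geq 2$ correctly reduces all higher iterations to the classical continuous-BV integration by parts.
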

\begin{proof}
We first extend $Y$ and $X$ to $(-1, N+1)$  by their constant value at $0$ and $N$ (Any other extension would be effective for the result but we use this one to be more explicit). We  focus  on  $Z^{+}$ for the sake of conciseness and  closely follow the steps of the proof by Boas recalled above. The elementary identity
\begin{equation}\label{eq_discrete_to_continuous}
I_N^{+} (Y, X)= \int_{1-a}^{N+1-a} \,Y_t \,\mathrm{d}  X_{[t]}\,,
\end{equation}
for any $a\in (0,1)$, leads to the following expression of the difference
\[ D_N^a  =I_N^{+} (Y, X)-\int_{1 - a}^{N +1- a} Y_t\,\mathrm{d}  X_t   =-\int_{1-a}^{N+1-a} Y_t \,\mathrm{d}  (X_t- X_{[t]}+b^1)\,.\]
Using the first condition in \eqref{def_saw_signature1} on $\pi_1(Z^{+}_t(X, b))$ and the properties of $Y$, we  apply a first integration by part formula:
\[  D_N^a  = -[Y_t \pi_1(Z^{+}_t(X, b))]^{N+1-a}_{1-a}+ \int_{1 - a}^{N+1 - a} \mathrm{d}  Y_t \pi_1(Z^{+}_t(X, b)) \,.\]
Combining the property of a $p$-controlled integrand in  \eqref{eq_def_p_controlled} with the recursive formulae in \eqref{def_saw_signature2} yields \[\int_{1 - a}^{N+1 - a} \mathrm{d} Y_t \pi_1(Z^{+}_t(X, b))=\int_{1 - a}^{N+1 - a} Y^1_t \mathrm{d} \pi_2(Z^{+}_t(X, b))\,.\]
Iterating the integration by parts formula up to order $m$ leads to
\[  D_N^a    = \sum_{l = 1}^{m}(- 1)^{l}[Y^{l-1}_t\pi_l (Z^{+}_t(X, b)) ]^{N+1-a}_{t=1-a} + (- 1)^{m+1} \int_{1 - a}^{N+1 - a}  Y^{m}_t \diff X_t\otimes(\pi_m(Z^{+}_t(X, b))\,.\]
Using the continuity of the paths $ Y^{l}$ and $\pi_l(Z^{+}_t(X, b))$ for $l=2,\,\ldots \,,m$ and the definition of $\pi_1(Z^{+}_t(X, b))$, we can let  $a $ tend to $1$ obtaining \eqref{eq:pre_EML_main_formula} in its forward version.

The  formula \eqref{eq:pre_EML_main_formula} in its backward version can be derived in a similar manner from the shifted version of the identity \eqref{eq_discrete_to_continuous}, i.e. 
\begin{equation}\label{eq_discrete_to_continuous_2}
\sum_{k = 0}^{N - 1} Y_{k} \delta X_{k}= \int_{-a}^{N-a} Y_t \mathrm{d}  X_{[t+1]}\,.
\end{equation}
We conclude in a similar manner replacing the integral $\int_{1-a}^{N+1-a}$ by the integral $\int_{-a}^{N-a}$ and letting $a$ tend to $ 0$.
\end{proof}

\section{Optimal choice of initial datum and EML formula}

Let  as before, $X$  be a rectifiable process  and  $Y$ a process which is a $m$-controlled integrand for $X$ for some integer $m\geq 1$. In order to derive \eqref{eq:EML_main_formula} from   \eqref{eq:pre_EML_main_formula}, we  impose an optimality criterion so as to choose the initial constant $b$ independently  of the underlying integrand $Y$ and minimise the expected remainder $\mathbb{E}\Vert R^{m}(b) \Vert$ with $R^{m}(b) $ given in \eqref{eq:pre_EML_main_formula}.  Combining the standard  inequality \[\Vert A x\Vert_W\leq \Vert A\Vert_{ L(V^{\otimes k}, W)} \Vert x\Vert_{ V^{\otimes k}}\] 
for any $A\in L(V^{\otimes k},W)$ and $x\in V^{\otimes k} $ with the continuity of the admissible norm, for any $b\in T_1^m(V)$ we obtain the following  estimate on the expected remainder in \eqref{eq:rem_pre_EML}:
 \[\mathbb{E}\Vert R^{m}(b) \Vert \leq   \mathbb{E}\int_{0}^{N}  \Vert Y^{m}_t \Vert_{ L(V^{\otimes (m+1)}, W)}   \Vert \pi_{m}(Z^{\pm}_t(X,b)) \Vert_{V^{\otimes m}} |\mathrm{d} X_t|\,,\]
where $|\mathrm{d} X_t| $ is the total variation measure of the path $X$. To disentangle the dependence on $b$ from $Y$ in a symmetric way, we   apply the usual Cauchy–Schwarz inequality to the product measure $\mathbb{P}\otimes |\mathrm{d}X_t|$ obtaining 
  \[\mathbb{E}\Vert R^{m}(b) \Vert \leq   \left(\mathbb{E}\int_{0}^{N}  \Vert Y^{m}_t \Vert_{ L(V^{\otimes (m+1)}, W)}^2 |\mathrm{d} X_t| \right)^{1/2} \left(\mathbb{E}\int_{0}^{N}  \Vert \pi_{m}(Z^{\pm}_t(X,b)) \Vert_{V^{\otimes m}}^2 |\mathrm{d} X_t|\right)^{1/2}.\]
 Minimising the  $L^2$-norm of the second factor in the right-hand side seems a reasonable strategy to fix $b$. Unfortunately,
 when we want to minimize for any  $m\geq 1$ the functional
 \begin{equation}\label{eq:false_equation}
     b\in T^m_1(V)\to\bE\int_0^N \Vert \pi_m(Z^{\pm}_t(X, b))\Vert^2_{V^{\otimes l}}|\mathrm{d} X_t|\,
 \end{equation}
 this choice depends   on the index $m$ in essential way and it does not guarantee a recursive definition of a sequence of constants $b\in T_1((V))$, meaning that for any couple of integers $m\geq n\geq 1$ and $b^n$, $b^m$ minimal value of \eqref{eq:false_equation} the the projection on the lower $n$ components of $b^m$ does not necessarily coincide with $b^n$. 
 
In  alternative to this approach, we want to define a optimisation procedure such that for any given optimal element of $b\in T^m_1(V)$ there exists a unique element $v\in V^{\otimes (m+1)}$ that minimises  the variance only on the new components. In the special setting of Hilbert spaces, this optimisation problem has an explicit solution.
 
  \begin{theorem}\label{prop_optimisation_P} 
Let $V$ be a Hilbert space. For any given  $b\in T^m_1(V)$  the functional 
\[
v\in V^{\otimes (m+1)} \to\bE\int_0^N \Vert \pi_{m+1}(Z^{\pm}_t(X, b+ v))\Vert^2_{V^{\otimes (m+1)}}|\mathrm{d} X_t|\,
\]  
has a unique minimum at the value 
 \begin{equation}\label{eq:optimal_constants}
b^* = - \frac{1}{\mathbb{E}\int_0^N |\mathrm{d} X_t|} \mathbb{E}\int_0^N  \pi_{m+1}(Z^{\pm}_t(X,(b,0))  |\mathrm{d} X_t|\,,
 \end{equation}
 where $(b,0)\in T^{m+1}_1(V)$ is the trivial extension of  $b$ to $T^{m+1}_1(V)$ by setting  $\pi_{m+1}(b,0)=0$.
\end{theorem}

\begin{proof}[Proof of Theorem \ref{prop_optimisation_P}]
It is sufficient to prove the theorem for the forward sawtooth signature. Writing the sum  $b+v$ as $(b,0)+v $, we use the identity  \eqref{proof_saw_signature_1} in the proof of Theorem \ref{thm_saw_signature_1} to  write
\begin{align*}
& \bE\int_0^N \Vert \pi_{m+1}(Z^{+}_t(X, (b,0)+ v))\Vert^2_{V^{\otimes {m+1}}}|\mathrm{d} X_t| \\&=\bE\int_0^N \Vert  \pi_{m+1} (S^{\flat}_{0,t}(X)\otimes ((b,0)+ v))-\sum_{k=0}^{[t]-1}\pi_{m}({S}^{\flat}_{k+1,t} (X)\otimes\delta X_k)\Vert^2_{V^{\otimes {m+1}}}|\mathrm{d} X_t|   \\&=\bE\int_0^N \Vert  \pi_{m+1} (S^{\flat}_{0,t}(X)\otimes (b,0))-\sum_{k=0}^{[t]-1}\pi_{m}({S}^{\flat}_{k+1,t} (X)\otimes\delta X_k)+ v\Vert^2_{V^{\otimes {m+1}}}|\mathrm{d} X_t|  \\&= \bE\int_0^N \Vert \pi_{m+1}(Z^{+}_t(X, (b,0)))+  v\Vert^2_{V^{\otimes {m+1}}}|\mathrm{d} X_t| \,.
		\end{align*} 
The optimality property of the value \eqref{eq:optimal_constants} then  follows from  averages in quadratic optimisation, which we recall   here for sake of completeness. For any fixed $v\in V^{\otimes {m+1}}$ we  write 
\begin{align*}
&\bE\int_0^N \Vert \pi_{m+1}(Z^{+}_t(X, (b,0)))+  v\Vert^2_{V^{\otimes {m+1}}}|\mathrm{d} X_t| \\&
=\bE\int_0^N \Vert  \pi_{m+1}(Z^{+}_t(X, (b,0)))+b^*\Vert^2_{V^{\otimes {m+1}}}|\mathrm{d} X_t|+\bE\int_0^N \Vert v-b^*\Vert^2_{V^{\otimes {m+1}}}|\mathrm{d} X_t| \\&+ 2 \bE\int_0^N \langle  \pi_{m+1}(Z^{+}_t(X, (b,0))) +b^*,v-  b^*\rangle|\mathrm{d} X_t|\,,
\end{align*}
where  $\langle\,, \rangle $ is the canonical extension to $V^{\otimes (m+1)}$ of the Hilbert bracket on $V$. By definition of $b^*$, see \eqref{eq:optimal_constants}, one has \begin{align*}
&\bE\int_0^N \langle \pi_{m+1}(Z^{+}_t(X, (b,0))) +b^*,v-  b^*\rangle|\mathrm{d} X_t|\\&= \langle\bE\int_0^N \pi_{m+1}(Z^{+}_t(X, (b,0)))+b^*|\mathrm{d} X_t|,v-  b^*\rangle= 0\,.
\end{align*}
Therefore one has
\begin{align*}
&\bE\int_0^N \Vert \pi_{m+1}(Z^{+}_t(X, (b,0)))+  v\Vert^2_{V^{\otimes (m+1)}}|\mathrm{d} X_t|\\&\geq \bE\int_0^N \Vert \pi_{m+1}(Z^{+}_t(X, (b,0))) +b^*\Vert^2_{V^{\otimes (m+1)}}|\mathrm{d} X_t|
\end{align*}
and $b^*$ is the unique global minimum thanks to the strong convexity of the norm function $\Vert \cdot \Vert_{V^{\otimes (m+1)}}$.
\end{proof}
From this expression we to define the optimal forward (backward) optimal tensor and sawtooth signatures.
\begin{definition}\label{defn_optimal}
Let  $X$  be a rectifiable process, we call $b^{\pm} \in T_1((V))$ the \emph{optimal forward and backward  tensors}  the tensor series defined in the following recursive way: one set $\pi_0(b^{\pm})=1$, then 
\begin{equation}\label{eq:optimal_constants_def_1}
 \pi_1(b^{\pm}) =  \frac{1}{\mathbb{E}\int_0^N |\mathrm{d} X_t|} \mathbb{E}\int_0^N  \pi_1(Z^{\pm}_t(X,\mathbf{1}))  |\mathrm{d} X_t|\,,
 \end{equation}
and for any integer $l\geq 2$ denoting by $b^{\pm,< l}=(b_0^{\pm}\,,-b_1^{\pm} \ldots\,, b_{l-1}^{\pm}, 0, 0, \ldots )$ the sequence obtained by truncating to zero all the previous term of $b^{\pm}$ before $l$ and taking the opposite value at level $1$, the term  $\pi_l(b^{\pm})$ is obtained by setting  \begin{equation}\label{eq:optimal_constants_def}
 \pi_l(b^{\pm}) = - \frac{1}{\mathbb{E}\int_0^N |\mathrm{d} X_t|} \mathbb{E}\int_0^N  \pi_l(Z^{\pm}_t(X,b^{\pm,<l}))  |\mathrm{d} X_t|\,.
 \end{equation}
    The corresponding sawtooth signature   $Z^{\pm}_t(X, \bar{b}^{\pm})$ with $\pi_1(\bar{b}^{\pm})=- \pi_1(b^{\pm}) $ is called the \emph{optimal forward} and \emph{backward sawtooth signature}.
 \end{definition}
\begin{remark}\label{rk_minus_sign}
    The condition \eqref{eq:optimal_constants_def_1} differs from \eqref{eq:optimal_constants_def} by a sign factor which is recorrected in the term $b^{\pm,<l}$ and $\bar{b}^{\pm}$ because we want to rewrite the factor $-\pi_1(b)$ appearing when $m=1$ in \eqref{eq:pre_EML_main_formula} as a sum, so that the identity \eqref{eq:EML_main_formula} looks more similar to \eqref{eq:clas_EML}, where no negative sign is employed. By a trivial change of variable this term has the simple interpretation of the minimum of the functional 
    \[
v\in V \to\bE\int_0^N \Vert \pi_{1}(Z^{\pm}_t(X, \mathbf{1}- v))\Vert^2_{V}|\mathrm{d} X_t|\,.
\]  
\end{remark}

We are now ready to show that the optimal forward and backward tensors extend the Bernoulli numbers. This result can be obtained as consequence of a more general recursion relation.

\begin{proposition}
Given  a positive non-zero random variable   $\alpha$  with finite moments of all orders, the optimal average tensors associated to the case $V=\bR$ and   $X_t=\alpha t $  are given at level $1$ by 
\begin{align} \label{recursion_forward_1}
\pi_1(b^{+})&=   \frac{\mathbb{E}[\alpha^2]}{\mathbb{E}[\alpha] } B^{+}_1   \,,\quad \pi_1(b^{-})=   \frac{\mathbb{E}[\alpha^2]}{\mathbb{E}[\alpha] } B^{-}_1   \,.
\end{align}
Moreover, for any integer $l\geq 2$ one has the following recursions 
  		\begin{align} \nonumber
\pi_l(b^{+})&=   \frac{\mathbb{E}[\alpha^{l+1}]}{\mathbb{E}[\alpha] }\frac{B^{+}_l}{l!} + \left(\mathbb{E}[\alpha^{l}]\frac{\mathbb{E}[\alpha^{2}]}{\mathbb{E}[\alpha]^2}-\frac{\mathbb{E}[\alpha^{l+1}]}{\mathbb{E}[\alpha] }\right)\frac{N^{l-1}}{l!}B^{+}_1\\&+ \sum_{j=2}^{l-1}\frac{N^{l-j}}{(l+1-j)!}\bigg(\frac{\mathbb{E}[\alpha^{l+1}]}{\mathbb{E}[\alpha] }\frac{B^{+}_j}{j!}-\frac{\mathbb{E}[\alpha^{l+1-j}]}{\mathbb{E}[\alpha] }\pi_j(b^+) \bigg) \,,\label{recursion_forward}\\\nonumber
  \pi_l(b^{-})&= \frac{\mathbb{E}[\alpha^{l+1}]}{\mathbb{E}[\alpha] }\frac{B^{-}_l}{l!} + \left(\mathbb{E}[\alpha^{l}]\frac{\mathbb{E}[\alpha^{2}]}{\mathbb{E}[\alpha]^2}-\frac{\mathbb{E}[\alpha^{l+1}]}{\mathbb{E}[\alpha] }\right)\frac{N^{l-1}}{l!}B^{-}_1\\&+ \sum_{j=2}^{l-1}\frac{N^{l-j}}{(l+1-j)!}\bigg(\frac{\mathbb{E}[\alpha^{l+1}]}{\mathbb{E}[\alpha] }\frac{B^{-}_j}{j!}-\frac{\mathbb{E}[\alpha^{l+1-j}]}{\mathbb{E}[\alpha] }\pi_j(b^-) \bigg) \,.
  \label{recursion_backward}
   \end{align}
\end{proposition}
\begin{proof} 
 By assumption on $\alpha$, the expressions \eqref{recursion_forward} and \eqref{recursion_backward} are well defined. We first consider  the case of forward optimal average tensors. Using the a.s. relation 
		\[ \alpha\int_0^N \pi_{l}(Z^{+}_s(\alpha t, b^{+,<l}))ds=\pi_{l+1}(Z^{+}_N(\alpha t, b^{+,<l}))\]
 and the identities \eqref{Bernoulli_expl_saw_sign1}, \eqref{Bernoulli_expl_saw_sign2}   one has   when $l=1$ the a.s. identity  
    \begin{align*}
&\frac{\alpha }{N\mathbb{E}[\alpha] } \int_0^N \pi_1(Z^{\pm}_s(\alpha t,\mathbf{1})) ds = \frac{1}{N\mathbb{E}[\alpha] } \pi_{2}(Z^{\pm}_N(\alpha t,\mathbf{1}))=\frac{1}{N\mathbb{E}[\alpha] }- \alpha^2N B^{\mp}_1
\end{align*}
 which becomes  \eqref{recursion_forward_1} when we take the expectation and we check the definition in \eqref{eq:optimal_constants_def_1}. for any $l\geq 
 2$ we still use \eqref{Bernoulli_expl_saw_sign1} and we derive in the forward case
  \begin{align*}
  &- \frac{\alpha }{N\mathbb{E}[\alpha] } \int_0^N \pi_l(Z^{+}_s(\alpha t,b^{+,<l})) ds =- \frac{1}{N\mathbb{E}[\alpha] } \pi_{l+1}(Z^{+}_N(\alpha t,b^{+,<l}))\\&= \frac{\alpha^{l+1}}{\mathbb{E}[\alpha] }\sum_{j=2}^{l}\frac{N^{l-j}}{(l+1-j)!}\frac{B^{-}_j}{j!}-\sum_{j=2}^{l-1}\frac{\alpha^{l+1-j}N^{l-j}}{\mathbb{E}[\alpha] (l+1-j)!}\pi_j(b^+)  + \frac{N^{l}}{l!}(\frac{\alpha^{l+1}}{\mathbb{E}[\alpha] }B^{-}_1+\alpha^{l} \frac{\mathbb{E}[\alpha^2]}{\mathbb{E}[\alpha] } B^{+}_1) \,.
  \end{align*}
  By taking the expectation and using the key-identities on Bernoulli numbers
  \begin{equation}\label{bern_prop}
  B^{-}_1=-B^{+}_1\quad B^{-}_l=B^{+}_l\quad \text{for any integer $l\geq 2$}
  \end{equation}
  we obtain   \eqref{recursion_forward}.  To obtain \eqref{recursion_backward}, we repeat the  same calculations  starting from \eqref{Bernoulli_expl_saw_sign2}.
\end{proof}

\begin{theorem}\label{thm:classical_bernoulli}
When   $V=\bR$ and $X_t=\lambda t$  with $\lambda>0$ one has 
\begin{equation}\label{eq:opt_constants1}
\pi_l(b^{\pm})=\lambda^l\frac{B^{\pm}_l}{l!}\, , \quad \pi_m(Z^{\pm}_N(\lambda t,\bar{b}^{\pm}))=\lambda^m\frac{B^{\pm}_m}{m!} 
\end{equation}
for any couple of integers $l\geq 1$ and $m\geq 2$. If $\lambda=1$, we further have 
\begin{equation} \label{eq:remainder}
 \pi_l(Z^{\pm}_t( t,\bar{b}^{\pm}))=\frac{1}{l!}P_l(t-[t])
\end{equation}
for any $l\geq 2$ and the formula
\eqref{eq:EML_main_formula} coincides with \eqref{eq:clas_EML}.
\end{theorem}
\begin{proof}
We will prove the first identity in \eqref{eq:opt_constants1} by induction on $l$. We first observe that   $\pi_0(b^{\pm})= B_0^{\pm}=1$. Moreover at level $1$, \eqref{eq:opt_constants1} is still true  from \eqref{recursion_forward_1}. Assuming the result  true  at any order strictly smaller than $l\geq 2$, we insert the expression $\pi_n(b^{\pm})=\lambda^n\frac{B^{\pm}_n}{n!}$ for any $n<l$ inside the recursions \eqref{recursion_forward} \eqref{recursion_backward} and  we easily obtain
\begin{align*}
\pi_l(b^{\pm})=\lambda^l\frac{B^{\pm}_l}{l!}\,.
\end{align*}
  To check the second part of \eqref{eq:opt_constants1}, we compute $ \pi_m(Z^{\pm}_N(\lambda t,\bar{b}^{\pm}))$ directly   from \eqref{Bernoulli_expl_saw_sign1} and \eqref{Bernoulli_expl_saw_sign2}. For instance in the forward setting we have 
 \begin{align*}
  \pi_m(Z^{+}_N(\lambda t,\bar{b}^{+})) &= \sum_{j=1}^{m}\frac{\lambda^{m-j}N^{m-j}}{(m-j)!}\pi_j(\bar{b}^{+})- \lambda ^m\sum_{j=1}^{m-1}\frac{N^{m-j}}{(m-j)!}\frac{B^{-}_j}{j!}=\lambda^m\frac{B^{\pm}_m}{m!} \,,
  \end{align*}
 as a consequence of the properties in \eqref{bern_prop}. A similar computation holds in the backward setting. If $\lambda=1$,  plugging the identities \eqref{eq:opt_constants1} into \eqref{eq:EML_main_formula} is yields \eqref{eq:clas_EML} modulo the remainder terms \eqref{eq:rem_EML} and \eqref{eq:clas_rem}, which a fortiori have to coincide. We therefore obtain
\[\int_0^N f^{(m)}(s)\pi_m(Z^{\pm}_s( t,\bar{b}^{\pm}))\diff s= \frac{1}{ m!}\int_0^Nf^{(m)}(s)P_m(s-[s])\diff s\]
for any smooth function $f$, which yields the desired result.
\end{proof}

\section{Interpolating discrete and continuous signature}

After exploring the choice of possible constants compatible with the classical EML formula, we  shall use the formula \eqref{eq:pre_EML_main_formula} in the backward case where $b=\mathbf{1}$  to deduce a new deterministic identity linking the signature of a rectifiable path $X$ with its discrete versions, defined from a finite set of values $x\colon [[0, N]]\to V$ with  $[[0, N]]=\{0\,, \ldots N\}$. We will refer to $x$ as a \emph{time series}. To link these two notions, we will always relate a rectifiable path $X\colon [0, N]\to V$ and a time-series $x$ such that $X$ interpolates $x$ i.e. $X_k=x_k$ for any $k\in [[0, N]]$.
Throughout this section we will also use the notation $Z\colon [0, N]\to T_1((V))$ to denote the backward sawtooth signature $Z^{-}(X)$.

\subsection*{Sawtooth sum signature and compositions}
Using these conventions, the formula \eqref{eq:pre_EML_main_formula} turns into a simpler identity.

\begin{proposition}
\label{thm:correction_signature}
For any  rectifiable process $X\colon [0,N]\to V$ interpolating the values of a time series  $x\colon [[0, N]]\to V$ one has  for any integer $l\geq 1$ any integer time $n\in [[0, N]]$
\begin{equation}\label{eq:sawtooth} \pi_l(S_{0,n}(X))= \sum_{k=0}^{n-1}\pi_{l-1}(S_{0,k}(X))\otimes \delta x_k  + \sum_{p= 2}^{l}(-1)^{p+1} \pi_{l-p}(S_{0,n}(X))\otimes  \pi_p(Z_n)\,.
\end{equation}
\end{proposition}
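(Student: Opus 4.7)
The plan is to specialize the preliminary EML formula \eqref{eq:pre_EML_main_formula} in its backward version, with the choice $b=\mathbf{1}$ and integrand $Y_t := \pi_{l-1}(S_{0,t}(X))$. First I would verify that $Y$ is (after trivial extension) an $l$-controlled integrand of $X$. Since the signature satisfies $dS_{0,t}(X) = S_{0,t}(X)\otimes dX_t$, each projection obeys $d\pi_k(S_{0,t}(X)) = \pi_{k-1}(S_{0,t}(X))\otimes dX_t$. Consequently the iterated derivatives read
\[
Y^j_t = \pi_{l-1-j}(S_{0,t}(X)) \qquad (j=0,1,\dots,l-1),
\]
under the standard identification of $v\in V^{\otimes k}$ with the map $w\mapsto v\otimes w$. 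Because $Y^{l-1}_t = \pi_0(S_{0,t}(X))=1$ is constant, one may set $Y^l\equiv 0$, making $Y$ an $l$-controlled integrand.

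Next I would identify each term in \eqref{eq:pre_EML_main_formula} applied to this $Y$. The discrete side becomes
\[
I_N^{-}(Y,X) = \sum_{k=0}^{N-1}\pi_{l-1}(S_{0,k}(X))\otimes \Delta x_k,
\]
using $X_k=x_k$. The continuous integral collapses to the signature by the ODE above:
\[
\int_0^N Y_t\, dX_t = \int_0^N \pi_{l-1}(S_{0,t}(X))\otimes dX_t = \pi_l(S_{0,N}(X)).
\]
The linear boundary term $[Y_t\pi_1(b)]_0^N$ is zero since $\pi_1(\mathbf{1})=0$, and the remainder $R^l(\mathbf{1}) = (-1)^{l+1}\int_0^N Y^l_t\, d\pi_{l+1}(Z^-_t(X))$ vanishes identically because $Y^l\equiv 0$.

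For the correction sum, I would use that $S_{0,0}(X)=\mathbf{1}$ and $Z^-_0(X,\mathbf{1})=\mathbf{1}$, so every $t=0$ contribution is zero. At $t=N$ one has $Y^{j-1}_N = \pi_{l-j}(S_{0,N}(X))$ and $\pi_j(Z^-_N(X,\mathbf{1})) = \pi_j(Z_N)$, and hence the $j$-th summand equals $(-1)^j\pi_{l-j}(S_{0,N}(X))\otimes \pi_j(Z_N)$. Assembling everything yields
\[
\sum_{k=0}^{N-1}\pi_{l-1}(S_{0,k}(X))\otimes \Delta x_k = \pi_l(S_{0,N}(X)) + \sum_{j=2}^{l}(-1)^{j}\pi_{l-j}(S_{0,N}(X))\otimes \pi_j(Z_N),
\]
and rearranging (with $p=j$, so the sign flips to $(-1)^{p+1}$) gives \eqref{eq:sawtooth}.

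The only real obstacle is bookkeeping: keeping the tensor-product conventions consistent so that each $Y^{j-1}_t \in V^{\otimes(l-j)}$ pairs correctly with $\pi_j(Z^-_t(X,\mathbf{1})) \in V^{\otimes j}$ to produce an element of $V^{\otimes l}$, and confirming that the truncation level $m=l$ is indeed admissible because the would-be $(l+1)$-st term is killed by $Y^l=0$. No new analytic input is required beyond Theorem \ref{thm:prelEML}; the proposition is essentially a direct specialisation of it.
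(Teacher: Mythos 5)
Your proof is correct and follows essentially the same route as the paper: the paper's own proof is exactly this specialisation of the backward preliminary EML formula with $Y_t v=\pi_{l-1}(S_{0,t}(X))\otimes v$, $b=\mathbf{1}$ and vanishing remainder (the paper takes truncation level $l+1$ rather than your $l$, which is immaterial since the extra terms are killed by $Y^{l}\equiv 0$). One small imprecision: $Z^{-}_0(X,\mathbf{1})$ is not exactly $\mathbf{1}$ (its level-one component is $-\Delta x_0$), but since only the components $\pi_j$ with $j\geq 2$ enter the boundary sum and these equal $\pi_j(\mathbf{1})=0$ at $t=0$, your conclusion is unaffected.
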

\begin{proof}
 To prove the identity we   apply the backward formulation of \eqref{eq:pre_EML_main_formula} 
where we choose the path $Y\colon [0, n]\to L(V, V^{\otimes l})$  given by $Y_t v= \pi_{l-1}(S_{0,t}(X))\otimes v $, the initial datum    $b=\mathbf{1}$
 and   $m=l+1$. Since $ R^{n+1}(b)=0$ for any $b$ and $I^{-}_N(Y, X)= \sum_{k = 0}^{n-1} Y_k \delta X_k $,    we conclude.
\end{proof}

This simple formula can be iterated to compare the components of the signature with the \emph{sum signature} of $x$, defined  as the  map $\Sigma(x)\colon [[0, N]]^2_<\to T_1((V))$
given on any component $l\geq 1$ by 
\[ \pi_l(\Sigma_{m,n}(x))=\sum_{m\leq i_1< \ldots i_l< n} \delta x_{i_1} \otimes\ldots \otimes \delta x_{i_l}\,.\]
 Clearly, for any path $X$ interpolating a time series $x$ one has the trivial identity 
\[\pi_1(S_{0,N}(X))= \pi_1(\Sigma_{0,N}(x))= x_N-x_0.\]
However, at higher level, such an identity  does not hold any longer due to the appearance of correction terms, which mix the values $x$ with the "shape" of the chosen interpolation. We represent them by means of the combinatorial tool of compositions. 
 
Given an integer  $n\geq 1,$ a \emph{composition} of $n$ is a vector $I=(i_1\,, \ldots ,i_l)$ whose   components are strictly positive integers that add up to $n$.   Let $C(n)$ denote the set of compositions of $n$. We call  the element of the set $C(1)\cup C(2)\cup \ldots $  a composition. For practical reason we also  add to the set of the composition an extra symbol $\emptyset $ which we call the empty composition. By construction on any given composition  $I=(i_1\,, \ldots ,i_l)$  (even when $I=\emptyset$ in  a trivial way) we can unambiguously define the values 
\[|I|= l\,,  \quad \Vert I \Vert= \sum_{k=1}^{|I|}i_k\,,\quad I!= i_1!\ldots i_l!. \] 
We call $\Vert I \Vert$, $|I|$, $I!$ respectively the sum,  the length and the factorial of a  composition $I$.

We want to extract information about the position of the integer $1$ among the components from the values of a composition $I=(i_1\,, \ldots ,i_l)$. Let us denote by $I^{\sharp}$ the longest vector  contained in $I$ having the value $1$ as its terminal value on the right,  which we call  the \emph{unity projection} of $I$. Similarly we denote by $I^*$ the vector obtained  by suppressing all the values of $1$ inside the components of $I$, which we call  the \emph{non-unity projection} of $I$. If the composition contains only coordinates with $1$ we simply set $I^*=\emptyset$ and similarly  if $I$ does not contains $1$ one has $I^{\sharp}=\emptyset$. Here are some examples
\[(1,1,1)^{\sharp}=(1,1,1)\,, \quad (1,1,1)^{*}=\emptyset \,,\quad (2)^\sharp=\emptyset\]
\[  (1,2)^{\sharp}= (1)\,, \quad (2,1)^{\sharp}=(2,1)\,, \quad (2,1)^{*}=(1,2)^*=(2)\,.\]
By simply iterating the identity \eqref{eq:sawtooth} we can easily derive an identity to express the correction term between $S(X)$ and $\Sigma(x)$ by coupling $\Sigma(x)$ and $Z$ in a bigger tensor structure. To properly define it, we introduce the space of \emph{composition tensor series}
\[T^C(V)=\mathbb{R}\mathbf{1}\oplus \prod_{n\geq 1} (V^{\otimes n})^{C(n)}\,.\]
Every element of $v\in T^C(V)$ will consists in a sequence of tensor $v=(v_0\,, v_1\,,\ldots)$  where each tensor $v_n$ will be  a cartesian product of elements in $V^{\otimes n}$ that are indexed by $C(n)$. Similarly to the case of standard tensor series, for any composition $I$ we denote by $\pi_I (v)\in V^{\otimes \Vert I\Vert}$ the projection of each element $v\in T^C(V)$ to the  $I$-th component inside $T^C(V)$ and we use the notation $T^C_1(V)$ for the tensor series whose first component is equal to $1$. The extra components contained in $T^C(V)$ will allow us to express new terms appearing recursively from \eqref{eq:sawtooth}.

\begin{definition}  
For any  rectifiable process $X\colon [0,N]\to V$ interpolating the values of a time series  $x\colon [[0, N]]\to V$ we define the two-parameter map $\Sigma(x, Z)\colon [[0, N]]^2_<\to T^C_1(V)$ whose components are given  for any composition $I=(i_1\,, \ldots ,i_l)$
\begin{equation}\label{eq:partial_it_sum_saw}
    \pi_I(\Sigma_{m,n}( x,Z))=\sum_{m\leq j_1<\ldots <j_{q}<n} \pi_{i_1}^* (\delta x_{j_1},Z_{j_1}) \otimes\ldots \otimes \delta x_{j_q}\otimes \pi_{i_{q+1}}(Z_n) \otimes  \ldots \otimes \pi_{i_{l}}(Z_n)\,,
\end{equation}  
 where    $|I^{\sharp}|=q$ 
 and we use the notation
\[\pi_{i_1}^* (\delta x_{j_1},Z_{j_1})=\begin{cases}
    \delta x_{j_1} & \text{if} \quad i_1=1\\\pi_{i_1} (Z_{j_1})& \text{if} \quad i_1>1
\end{cases}\,.\] 
We call the expression \eqref{eq:partial_it_sum_saw} the
\emph{sawtooth sum signature} of level $I$.
\end{definition}

The sawtooth sum signature  $   \Sigma_{m,n}( x,Z)$ provides a way to couple $\Sigma(x)$ and $Z$ by projecting on different components. Indeed  for any integer $l\geq 2$ one has 
\begin{equation}\label{eq:trivial_extension}
\pi_l(S_{m,n}(x))= \pi_{(1\,, \ldots \,, 1)_l} (\Sigma^{}_{m,n}( x,Z))\,\quad \pi_l(Z_n(x))= \pi_{(l)} (\Sigma_{m,n}( x,Z))\,,
\end{equation}
where we denote by $(1\,, \ldots \,, 1)_l$ and $(l)$ the composition on $l$ containing only $1$ among its components an the trivial composition given the integer $l$ itself. All the other coordinates involve mixtures of sums with the terms of $Z$ and they allow to compare  the signature $S(X)$ with its discrete version at any level.
\begin{theorem}\label{thm:correction_signature2}
For any  rectifiable process $X\colon [0,N]\to V$ interpolating the points of a time series  $x\colon [[0, N]]\to V$, one has  for any integer $l\geq 1$ and any integer time $n\in [[0, N]]$
\begin{equation}\label{eq:discrete_signature}
   \pi_l(S_{0,n}(X))=\pi_l(\Sigma_{0,n}(x))+ \sum_{I\in C(l), \; I \neq (1\,, \ldots \,, 1)_l}(-1)^{\Vert I^*\Vert + \vert I^*\vert} \pi_I(\Sigma_{0,n}(x,Z))\,. 
\end{equation}
\end{theorem}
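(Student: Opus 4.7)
The plan is to prove the identity by induction on $l$, with the recursion of Proposition~\ref{thm:correction_signature} as the main engine. The base case $l=1$ is immediate: the only composition of $1$ is $(1)$, and $\pi_1(S_{0,N}(X))=X_N-X_0=\pi_1(\Sigma_{0,N}(x))=\Sigma^{(1)}_{0,N}(x,Z)$.

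For the inductive step, apply Proposition~\ref{thm:correction_signature},
\[
\pi_l(S_{0,N}(X)) = \sum_{k=0}^{N-1}\pi_{l-1}(S_{0,k}(X))\otimes\Delta x_k + \sum_{p=2}^{l}(-1)^{p+1}\pi_{l-p}(S_{0,N}(X))\otimes\pi_p(Z_N),
\]
expand both lower-order signatures via the inductive hypothesis, and reorganise the resulting terms by compositions of $l$. The combinatorial key is that every $I\in C(l)$ is uniquely of one of two types, corresponding to the two sums: either $I=(J,1)$ with $J\in C(l-1)$ (last component equals $1$), or $I=(K,p)$ with $K\in C(l-p)$ and $p\geq 2$ (last component $\geq 2$). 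The signs align, since $(J,1)^{*}=J^{*}$ gives $(-1)^{\|(J,1)^{*}\|+|(J,1)^{*}|}=(-1)^{\|J^{*}\|+|J^{*}|}$, and $(K,p)^{*}=(K^{*},p)$ gives $(-1)^{\|(K,p)^{*}\|+|(K,p)^{*}|}=(-1)^{p+1}(-1)^{\|K^{*}\|+|K^{*}|}$, which is exactly the prefactor $(-1)^{p+1}$ in front of the second sum.

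For compositions of the form $I=(K,p)$ with $p\geq 2$ the matching of tensor structures is painless: $\Sigma^{K}_{0,N}(x,Z)\otimes\pi_p(Z_N)$ is exactly $\Sigma^{(K,p)}_{0,N}(x,Z)$, because $(K,p)^{\sharp}=K^{\sharp}$ (appending a component $\geq 2$ does not shift the position of the last $1$), so that $\pi_p(Z_N)$ plays the role of the new tail factor. When $J\in C(l-1)$ already ends in $1$, the expansion of $\pi_{l-1}(S_{0,k}(X))$ has no tail, so that summing $\Sigma^{J}_{0,k}(x,Z)\otimes\Delta x_k$ over $k$, with a fresh summation variable $j_{|J|+1}:=k$, directly yields $\Sigma^{(J,1)}_{0,N}(x,Z)$.

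The main obstacle is the remaining case: $I=(J,1)$ with $J$ not ending in $1$. There, the inductive expression for $\pi_{l-1}(S_{0,k}(X))$ contains tail factors $\pi_{J_i}(Z_k)$ all evaluated at the common sub-endpoint $k$, while $\Sigma^{(J,1)}_{0,N}(x,Z)$ requires those factors at distinct summation times $j_i<j_{|J|+1}<N$. Closing the induction here amounts to telescoping the same-time tail expressions into nested ones with distinct time indices, using the defining ODE $d\pi_k(Z^-_t)=dX_t\otimes\pi_{k-1}(Z^-_t)$ combined with an Abel-style summation by parts, so that the resulting cross terms regroup with contributions coming from other compositions. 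This combinatorial bookkeeping is the technical heart of the proof.
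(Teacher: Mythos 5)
Your strategy coincides with the paper's: induction on $l$ through Proposition \ref{thm:correction_signature}, splitting the compositions of $l$ according to their last part, with the sign bookkeeping $(J,1)^*=J^*$ and $(K,p)^*=(K^*,p)$ absorbing the prefactor $(-1)^{p+1}$; your treatment of the cases $I=(K,p)$, $p\geq 2$, and $I=(J,1)$ with $J$ ending in $1$ (and your correct instantiation of the inductive hypothesis on $[0,k]$ with tail factors at $Z_k$) matches the paper's argument. But the proposal stops exactly where the induction has to be closed: for $I=(J,1)$ with $J$ not ending in $1$ you only announce a ``telescoping / Abel summation'' that should turn the same-time factors $\pi_{J_i}(Z_k)$ produced by the induction into the distinct-time factors demanded by your reading of \eqref{eq:partial_it_sum_saw}, and you never carry it out. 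This is not routine bookkeeping to be deferred; it is the entire content of the inductive step for those compositions, so as written the proposal has a genuine gap.

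Moreover, under the literal reading of \eqref{eq:partial_it_sum_saw} that you adopt (independent strictly increasing indices $j_1<\dots<j_q$ attached to every entry of $I^{\sharp}$), the identity you would need, namely $\sum_{k=0}^{N-1}\Sigma^{J}_{0,k}(x,Z)\otimes\Delta x_k=\Sigma^{(J,1)}_{0,N}(x,Z)$, is false, so no summation by parts can produce it. Take $J=(2)$, $N=2$, $V=\mathbb{R}$, $X$ the linear interpolation of $x_0,x_1,x_2$ with increments $a=\Delta x_0$, $b=\Delta x_1$: then $\pi_2(Z_0)=0$, $\pi_2(Z_1)=-\tfrac{a^2}{2}$, so the induction produces
\begin{equation*}
\sum_{k=0}^{1}\pi_2(Z_k)\,\Delta x_k=-\tfrac{a^2b}{2}\,,\qquad\text{while}\qquad \sum_{0\leq j_1<j_2<2}\pi_2(Z_{j_1})\,\Delta x_{j_2}=\pi_2(Z_0)\,\Delta x_1=0\,,
\end{equation*}
and with the latter value the right-hand side of \eqref{eq:discrete_signature} at $l=3$ equals $\tfrac{a^3}{6}+\tfrac{b^3}{6}+\tfrac{ab^2}{2}$, which misses $\pi_3(S_{0,2}(X))=\tfrac{(a+b)^3}{6}$ by exactly $\tfrac{a^2b}{2}$. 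The paper's proof closes the induction because it reads \eqref{eq:partial_it_sum_saw} the other way: the components $I_i>1$ lying before the last $1$ are evaluated at the same time index as the increment that follows them (only the entries equal to $1$ carry independent summation variables), so that $\sum_k\Sigma^{J}_{0,k}\otimes\Delta x_k=\Sigma^{(J,1)}_{0,N}$ holds directly from the definition and your ``main obstacle'' never arises. So you correctly sensed the mismatch, but the resolution is not a hidden telescoping argument; it is that the sawtooth sum signature must be interpreted with these same-time $Z$-factors, and under your interpretation the identity you set out to prove is not even true.
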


\begin{proof}
We prove the result by induction on $l$. The  initial step of the induction follows trivially from the definition of $\Sigma(x)$ and $S(X)$. Supposing the identity true for some integer $l>1$, using \eqref{eq:trivial_extension} we write the induction hypothesis as 
\[\pi_l(S_{0,n}(X))= \sum_{I\in C(l)}(-1)^{\Vert I^*\Vert + \vert I^*\vert}  \pi_I(\Sigma_{0,n}(x,Z))\,.\]
To prove the induction step, we combine \eqref{eq:sawtooth} with the induction hypothesis, thereby obtaining    \begin{align*}
        \pi_{l+1}(S_{0,n}(X))&=\sum_{k=0}^{n-1}\pi_{l}(S_{0,k}(X))\otimes \delta x_k  + \sum_{p= 2}^{l+1}(-1)^{p+1} \pi_{l-p}(S_{0,n}(X))\otimes  \pi_p(Z_n)\\&=
        \sum_{I\in C(l)}(-1)^{\Vert I^*\Vert + \vert I^*\vert}\sum_{k = 0}^{n-1} \pi_I(\Sigma_{0,k}(x,Z))\otimes \delta x_k  \\&+ \sum_{p = 2}^{l+1}(-1)^{p+1}\sum_{J\in C(l+1-p)}(-1)^{\Vert J^*\Vert + \vert J^*\vert}  \pi_J(\Sigma_{0,n}(x,Z))\otimes \pi_{p}(Z_n) 
    \end{align*}
We then apply the definition \eqref{eq:partial_it_sum_saw} and the right-hand of the above sum becomes
    \begin{align*}
        &\sum_{I\in C(l)}(-1)^{\Vert (I,1)^*\Vert + \vert (I,1)^*\vert}  \pi_{(I,1)}(\Sigma_{0,n}(x,Z))\\&+ \sum_{p = 2}^{l+1}\sum_{J\in C(l+1-p)}(-1)^{\Vert (J,p)^*\Vert + \vert (J,p)^*\vert}  \pi_{(J,p)}(\Sigma_{0,n}(x,Z))\,.    \end{align*}
 The induction step then follows from the fact that every composition of $I\in C(l+1)$ is of the form $(I,1)$ for some $I\in C(l)$ or of the form $(J,p)$ for some $J\in C(l+1-p)$.
\end{proof}
\subsection*{Iterated sum signature of a time series}
The formula \eqref{eq:partial_it_sum_saw} cannot be more explicit because it depends on the chosen  interpolation, which is encoded in   the values of $Z$ at integer values. We conclude this section by showing how the recursive identity  \eqref{eq:sawtooth}in the special case of a finite dimensional vector space $V= \mathbb{R}^d $, allows to deduce a known formula between signature of linear interpolating path and the iterated sum signature, \cite[Definition 3.1]{Tapia20}.

For this purpose, we briefly recall this notion. We denote by $\mathfrak{A}_d$ the vector space generated by the free commutative and associative real semi-group over $d$ elements. An elementary way to realise it is to consider the vector space of polynomials $\mathbb{R}[X_1\,, \ldots\,, X_d]$ and then we consider the quotient vector space
\[\mathfrak{A}_d= \mathbb{R}[X_1\,, \ldots\,, X_d]/\{P\in \mathbb{R}[X_1\,, \ldots\,, X_d]\colon P=a\,, a\in \mathbb{R}\}\]
This vector space has a basis given on of non-constant monomials $X_{i_1}\ldots X_{i_n}$ which we represent by  $[i_1\ldots i_n]$ with  $i_1\,, \ldots,  i_n\in \{1\,, \ldots \,,d\}$  with possible repetitions of the values. Moreover, the intrinsic product operation among polynomial can pass to the quotient and it  will be denoted by $[,]\colon \mathfrak{A}_d\times \mathfrak{A}_d\to \mathfrak{A}_d$. Using this notations, we have the trivial identity $[i_1\ldots i_n]= [i_1[i_{n-1}i_n\ldots ]]$ independently on the order of the letters $i_1\,, \ldots,  i_n$.

Starting from the space $\mathfrak{A}_d$, we use it to define unitary tensors series over it as elements of $T_1((\mathfrak{A}_d))$. 
Even if $\mathfrak{A}_d$ is infinite-dimensional we use the countable basis $\mathcal{B}=\{ [i_1\ldots i_n]\colon i_1\,, \ldots,  i_n\in \{1, \ldots, d\}\} $ as a starting alphabet and we identify  $T_1((\mathfrak{A}_d))$ as the space of unitary formal series on words built from this set. Moreover for any $v\in T_1((\mathfrak{A}_d))$  and word $w$ built from $\mathcal{B}$ we denote by $\langle v, w\rangle $ the value of the term $w$ inside $v$, see e.g. \cite[Section 2]{bellingeri2023discrete} for further details.  Finally, we denote the components of a time series $x\colon [[0, N]]\to \mathbb{R}^d$ by $x_k^j $ with $j\in \{1, \ldots k\}$ for any $k\in [[0,N]]$ so that one has $x_k= (x_k^1\,, \ldots \,, x_k^d)$.
\begin{definition}\label{definizione_ISS}
For any time series  $x\colon [[0, N]]\to \mathbb{R}^d$, we define the iterated sum signature of $x$  as the two-parameter map $\mathcal{S}(x)\colon [[0, N]]^2_<\to T_1((\mathfrak{A}_d))$ defined on each component by
\begin{align}\label{def_ISS}
   \langle \mathcal{S}_{m,n}(x)\,, [i_1\ldots i_{j_1}]\ldots [i_{j_{n-1}-1}\ldots i_{j_k}]\rangle=\sum_{m\leq l_1<\ldots <l_{k}<n} \delta x_{l_1}^{[i_1\ldots i_{j_1}]} \ldots \delta x_{l_k}^{[i_{j_{k-1}-1}\ldots i_{j_k}]}  \,,  
\end{align}
where we have set $\delta x_{l_n}^{[i_{1}\ldots i_{k}]}= \delta x_{l_n}^{i_1}\ldots  \delta x_{l_n}^{i_{k}}$.
\end{definition}
\begin{remark}
The above definition of iterated sum signature is  taken from
\cite{Tapia20}, but differs from the original version by a different  parametrization of the increment (that is $\delta x_{k}= x_k- x_{k-1}$). We follow  the convention on increments contained   \cite{bellingeri2023discrete} which is coherent with our previous notations.
\end{remark}
The presence of new letters obtained by contracting the original $d$ values, allows to define a natural operation between words and composition in setting for any $I \in C(n)$, $I=(i_1,\ldots, i_k)$
\[[j_1\ldots j_n]_I= [j_1\ldots j_{i_1}]\ldots [j_{i_{k-1}}\ldots j_n]\,.\]
With these notations we can actually explicitly compute the backward sawtooth signature of a path that is interpolating linearly the values of a time series.

\begin{proposition}
 For any given time series  $x\colon [[0, N]]\to \mathbb{R}^d$, 
 the backward sawtooth signature of its linear interpolation path $X$ is given for any component $j_1\ldots j_n$ with $n\geq 2$ by the expression
\begin{equation}\label{explicit_sawtooth}\begin{split}\langle Z_t, &j_1\ldots j_n \rangle =\left(\frac{(t-[t])^n}{n!}- \frac{(t-[t])^{n-1}}{(n-1)!}\right)\delta{x}_{[t]}^{[j_1\ldots j_n]} \\&+ \sum_{i=0}^{n-2}\frac{(t-[t])^i}{i!}\sum_{\substack{I\in C(n-i)\\i_1>1}} \left(\frac{1}{I!}-\frac{1}{(I-e_1)!}\right)\delta{x}_{[t]}^{^{[j_1\ldots j_i]}} \langle \mathcal{S}_{0,[t]}(x),[j_{n}\ldots j_{i+1}]_I \rangle \,,
\end{split} 
\end{equation}
where we use the notation $I-e_1$ to denote the composition $I$ obtained by subtracting $1$ to the first component of $I$ and eliminating it when $i_1=1$. 
\end{proposition}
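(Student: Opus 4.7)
The plan is to start from Theorem \ref{EML_signature} applied with $b=\mathbf{1}$, which gives
$$Z_t \;=\; S^{\flat}_{0,t}(X) \;-\; \sum_{k=0}^{[t]} S^{\flat}_{k,t}(X)\otimes \Delta X_k,$$
and then exploit the very simple form of the flip signature on linear pieces. For $X$ the linear interpolation of $x$, on each segment $[k,k+1]$ the velocity is constantly $\Delta x_k$ and a direct evaluation of \eqref{eq:flipsign} yields $\pi_m S^{\flat}_{k,k+1}(X) = \Delta x_k^{\otimes m}/m!$, with the analogous formula $\pi_m S^{\flat}_{[t],t}(X) = (t-[t])^m\,\Delta x_{[t]}^{\otimes m}/m!$ on the incomplete final piece.

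Next, I would iterate the Chen-type multiplicativity of the flip signature established in Theorem \ref{EML_signature} to decompose $S^{\flat}_{0,t}(X) = S^{\flat}_{0,[t]}(X)\otimes S^{\flat}_{[t],t}(X)$ (and likewise $S^{\flat}_{k,t}(X) = S^{\flat}_{k,[t]}(X)\otimes S^{\flat}_{[t],t}(X)$ for $k\le [t]$), and then further split $S^{\flat}_{0,[t]}(X)$ as a tensor product over all integer subintervals. Projecting on a coordinate $\langle\,\cdot\,,\,j_1\cdots j_n\rangle$ turns each factor into a scalar, so the full projection becomes a sum over ways of splitting the word $j_1\cdots j_n$ into blocks, one block per subinterval. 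The number of letters in the final (fractional) block is $n-i$, producing the prefactor $(t-[t])^{n-i}/(n-i)!\,\Delta x_{[t]}^{j_{i+1}}\cdots\Delta x_{[t]}^{j_n}$; the assignment of block sizes on the integer intervals is exactly a composition $I=(I_1,\ldots,I_{|I|})$ of the complementary degree, with each contribution $1/I_r!$ and an ordered summation over the chosen intervals generating the iterated sum $\langle \mathcal{S}_{0,[t]}(x),\cdot\rangle$. The inversion of the word in $[j_n\cdots j_{i+1}]_I$ arises from the reversal in the definition \eqref{eq:flipsign} of $S^{\flat}$ (equivalently, from the identity $S^{\flat}_{s,t}(X)=(-1)^nS_{s,t}(\overleftarrow X)$ proved earlier), which swaps the natural integration order on the simplex.

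Finally, I would incorporate the subtraction $\sum_{k=0}^{[t]} S^{\flat}_{k,t}(X)\otimes\Delta X_k$. Separating the boundary term $k=[t]$ produces $S^{\flat}_{[t],t}(X)\otimes\Delta X_{[t]}$, whose $n$-th component is $(t-[t])^{n-1}/(n-1)!\,\Delta x_{[t]}^{[j_1\cdots j_n]}$, accounting precisely for the $-(t-[t])^{n-1}/(n-1)!$ correction in the first line of \eqref{explicit_sawtooth}. For $k<[t]$ the same block decomposition applies, but the rightmost block now carries an extra tensor factor $\Delta X_k$, which effectively decrements the first entry of the composition $I$; comparing this contribution with the $S^{\flat}_{0,t}(X)$ expansion yields, for each composition with $I_1>1$, the net coefficient $\frac{1}{I!}-\frac{1}{(I-e_1)!}$ stated in \eqref{explicit_sawtooth}, while the compositions with $I_1=1$ cancel out because they appear with opposite signs in the two sums (they correspond to picking the last integer index as part of the increment~$\Delta x_{k}$).

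The principal difficulty is the combinatorial bookkeeping: tracking at once the shuffled assignment of letters to intervals, the reversal of the fragment $[j_n\cdots j_{i+1}]_I$, and the sign/shift produced by the $\Delta X_k$ factor in the subtracted sum, then verifying that the residual coefficient collapses to the compact form $1/I!-1/(I-e_1)!$. A safe way to handle this, if the direct expansion becomes unwieldy, is to prove \eqref{explicit_sawtooth} by induction on $n$ using the defining ODE $d\pi_n(Z_t)=dX_t\otimes \pi_{n-1}(Z_t)$ together with continuity of $\pi_n(Z_t)$ at integer times, checking that both sides satisfy the same recursion and agree at $t=0$; this localises the combinatorics at each inductive step to a single integration against $dX_t=\Delta x_{[t]}\,dt$ and a single continuity matching at the jump $t=[t]$, where the composition structure grows by concatenation on the right.
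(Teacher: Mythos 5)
Your primary route is genuinely different from the paper's. The paper proves \eqref{explicit_sawtooth} by induction on $n$ directly from the defining relations \eqref{def_saw_signature1}--\eqref{def_saw_signature2}: it writes $\langle Z_t, j_1\ldots j_{n+1}\rangle=\int_0^t\langle Z_s,j_2\ldots j_{n+1}\rangle \diff X^{j_1}_s$, splits the integral over the integer subintervals plus the fractional piece, applies the inductive hypothesis, and regroups compositions (using that every $I\in C(n+1)$ with $I_1>1$ is either $(n+1)$ or $(J,i)$ with $J_1>1$) --- i.e.\ exactly your ``safe fallback''. Your main plan instead expands $Z_t=S^{\flat}_{0,t}(X)-\sum_{k=0}^{[t]}S^{\flat}_{k,t}(X)\otimes\Delta x_k$ via Theorem \ref{EML_signature} with $b=\mathbf{1}$, uses $\pi_m S^{\flat}_{k,k+1}(X)=\Delta x_k^{\otimes m}/m!$ on linear pieces and the multiplicativity of $S^{\flat}$ to reduce everything to a sum over block assignments of the word to subintervals. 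This is workable and buys a conceptual explanation of the coefficient $\frac{1}{I!}-\frac{1}{(I-e_1)!}$: the subtracted Riemann-sum term merges $\Delta x_k$ with the block sitting on the interval $[k,k+1]$, decrementing $I_1$, while the $I_1=1$ contributions cancel; the price is global combinatorial bookkeeping that the paper's induction localises to a single integration step.

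One concrete point to correct in your expansion: the flip signature is multiplicative in the \emph{reversed} order, $S^{\flat}_{s,t}(X)=S^{\flat}_{u,t}(X)\otimes S^{\flat}_{s,u}(X)$, since by \eqref{eq:flip_Ode} increments multiply on the left. Hence the fractional factor $S^{\flat}_{[t],t}(X)$ is the leftmost tensor factor and absorbs a \emph{prefix} $j_1\ldots j_i$ of the word, and the earliest integer intervals receive the last letters of the word, which is precisely the origin of the reversal $[j_{n}\ldots j_{i+1}]_I$ in \eqref{explicit_sawtooth}. As written, your splitting $S^{\flat}_{0,t}=S^{\flat}_{0,[t]}\otimes S^{\flat}_{[t],t}$ would attach the $(t-[t])$-powers to $\Delta x_{[t]}^{[j_{i+1}\ldots j_n]}$ instead of $\Delta x_{[t]}^{[j_1\ldots j_i]}$; with the orientation fixed, your bookkeeping lands on the stated formula.
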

 \begin{proof}
 The identity follows by induction on $n$. When  $n=2$ we use the explicit expression of $\pi_1(Z_t)$  given in  \eqref{def_saw_signature1} to derive the identity 
\begin{align*}
&\langle Z_t, j_1j_2\rangle=\int_{[t]}^{t} \delta{x}_{[t]}^{j_1} (X_s^{j_2}- x_{[t]+1}^{j_2}) \mathrm{d} s+\sum_{k=0}^{[t]-1}\int_{k}^{k+1} \delta{x}_k^{j_1} (X_s^{j_2}- x_{k+1}^{j_2}) \mathrm{d} s =\\&
= \left(\frac{(t-[t])^2}{2}-  (t-[t])\right)\delta{x}_{[t]}^{j_1}\delta{x}_{[t]}^{j_2}+\sum_{k=0}^{[t]-1}\delta{x}_k^{j_1}\delta{x}_k^{j_2}\int_{k}^{k+1}  (s- k-1) \mathrm{d} s \\&
=\left(\frac{(t-[t])^2}{2}-  (t-[t])\right)\delta{x}_{[t]}^{[j_1j_2]}-\frac{1}{2}\sum_{k=0}^{[t]-1}\delta{x}_{k}^{[j_1j_2]}\,.  
\end{align*}
  For the induction step we write for any $n>2$
\[\begin{split}
    &\langle Z_t, j_1\ldots j_{n+1}\rangle= \int_0^t   \langle Z_s, j_2\ldots j_{n+1}\rangle dX_s^{j_1} \\ 
    &=\sum_{k=0}^{[t]-1}\int_k^{k+1}\langle Z_s, j_2\ldots j_{n+1}\rangle \delta x_k^{j_1} ds +\delta x_{[t]}^{j_1}\int_{[t]}^t\langle Z_s, j_2\ldots j_{n+1}\rangle  ds= (I)+ (II)\,.
\end{split} \]
Let us consider the terms separately applying the induction hypothesis in both cases to $\langle Z_s, j_2\ldots j_n\rangle$. The first term reads
\[\begin{split}
(I)&=\sum_{k=0}^{[t]-1}\int_k^{k+1}\left(\frac{(s-k)^{n}}{n!}- \frac{(s-k)^{n-1}}{(n-1)!}\right)\delta{x}_{k}^{[j_1\ldots j_{n+1}]}\\&+ \sum_{k=0}^{[t]-1}\sum_{i=0}^{n-2}\int_k^{k+1}\frac{(s-k)^i}{i!}\sum_{\substack{I\in C(n-i)\\i_1>1}} \left(\frac{1}{I!}-\frac{1}{(I-e_1)!}\right) \langle \mathcal{S}_{0,k}(x),[j_{n+1}\ldots j_{i+1}]_I \rangle \delta{x}_{k}^{^{[j_1\ldots j_i]}}\,\\&=\sum_{k=0}^{[t]-1}\left(\frac{1}{(n+1)!}- \frac{1}{n!}\right)\delta{x}_{k}^{[j_1\ldots j_{n+1}]} \\&+\sum_{k=0}^{[t]-1}\sum_{i=0}^{n-2}\frac{1}{(i+1)!}\sum_{\substack{I\in C(n-i)\\i_1>1}} \left(\frac{1}{I!}-\frac{1}{(I-e_1)!}\right)\langle \mathcal{S}_{0,k}(x),[j_{n+1}\ldots j_{i+1}]_I \rangle\delta{x}_{k}^{^{[j_1\ldots j_i]}}\\ &=\sum_{I\in C(n+1)\,,\; \,i_1>1} \left(\frac{1}{I!}-\frac{1}{(I-e_1)!}\right) \langle \mathcal{S}_{0,[t]}(x),[j_{n}\ldots j_{1}]_I \rangle\,,
\end{split}\]
where the last equality is derived from the fact that any element $I \in C(n+1)$ with $i_1>1$ is $I=(n+1)$ or $I=(J, i)$ with $J \in C(n-j)$ and $j_1>1$. Dealing with the other expression in a similar manner we obtain
\[\begin{split}
(II)&=\int_{[t]}^{t}\delta x_{[t]}^{j_1}\left(\frac{(s-[t])^{n}}{n!}- \frac{(s-[t])^{n-1}}{(n-1)!}\right)\delta{x}_{[t]}^{[j_2\ldots j_n]}\mathrm{d} s\\&+ \sum_{i=0}^{n-2}\sum_{\substack{I\in C(n-i)\\i_1>1}} \left(\frac{1}{I!}-\frac{1}{(I-e_1)!}\right)\langle \mathcal{S}_{0,[t]}(x),[j_{n+1}\ldots j_{i+1}]_I \rangle \delta{x}_{[t]}^{^{[j_1\ldots j_i]}}\int_{[t]}^{t}\frac{(s-[t])^i}{i!} \mathrm{d} s\,\\&=\left(\frac{(t-[t])^{n+1}}{(n+1)!}-\frac{(t-[t])^{n}}{n!}\right)\delta{x}_{[t]}^{[j_1\ldots j_n]} \\&+\sum_{i=1}^{n-1}\frac{(t-[t])^i}{i!}\sum_{\substack{I\in C(n+1-i)\\i_1>1}} \left(\frac{1}{I!}-\frac{1}{(I-e_1)!}\right)\delta{x}_{[t]}^{^{[j_1\ldots j_i]}} \langle \mathcal{S}_{0,[t]}(x),[j_{n+1}\ldots j_{i+1}]_I \rangle\,.
\end{split}\]
 Combining the two expressions $(I)$ and $(II)$ yields the conclusion.
 \end{proof}
Evaluating the expression \eqref{explicit_sawtooth}  on integers and using the definition of $Z$ leads to simplified expressions of the flip and sawtooth signature on integer values.
 \begin{proposition} For any given time series $x\colon [[0, N]]\to \mathbb{R}^d$, 
 the backward sawtooth signature of the linear interpolation path $X$ on the integer values $m\in [[0, N]]$ is given for any component $j_1\ldots j_n$ with $n\geq 2$ by the expression
\begin{equation}\label{first_identity}
\begin{split}
     &\langle Z_m, j_1\ldots j_n\rangle =\sum_{\substack{I\in C(n)\\i_1>1}} \left(\frac{1}{I!}-\frac{1}{(I-e_1)!}\right)\langle \mathcal{S}_{0,m}(x),[j_{n}\ldots j_1]_I \rangle\\&=\sum_{\substack{I\in C(n)}} \frac{1}{I!}\langle \mathcal{S}_{0,m}(x),[j_{n}\ldots j_1]_I \rangle- \sum_{k=0}^{m-1}\sum_{\substack{I\in C(n-1)}} \frac{1}{I!}\langle\mathcal{S}_{k,m}(x),[j_{n-1}\ldots j_1]_I \rangle \delta x_k^{j_{n}}\,.
 \end{split}
 \end{equation}
 \end{proposition}
 \begin{proof}
The first part of \eqref{first_identity}   trivially follows from  \eqref{explicit_sawtooth}. The second equality follows from adding and subtracting the term 
\[\sum_{\substack{I\in C(n-1)}} \frac{1}{I!}\langle \mathcal{S}_{0,m}(x),j_{n}[j_{n-1}\ldots j_1]_I \rangle\,.\]
Then we obtain 
\[
\begin{split}
     &\langle Z_m, j_1\ldots j_n\rangle =\sum_{\substack{I\in C(n)}} \frac{1}{I!}\langle \mathcal{S}_{0,m}(x),[j_{n}\ldots j_1]_I \rangle- \sum_{\substack{I\in C(n-1)}} \frac{1}{I!}\langle\mathcal{S}_{0,m}(x),[j_nj_{n-1}\ldots j_1]_{I+e_1} \rangle \,.
 \end{split}
\]
Then we simply apply the Definition \ref{definizione_ISS} and develop the term $\langle\mathcal{S}_{0,m}(x),[j_nj_{n-1}\ldots j_1]_{I+e_1} \rangle$ 
according to the last index in the iterated sum to obtain
\[\sum_{\substack{I\in C(n-1)}} \frac{1}{I!}\langle\mathcal{S}_{0,m}(x),[j_nj_{n-1}\ldots j_1]_{I+e_1} \rangle=\sum_{k=0}^{m-1}\sum_{\substack{I\in C(n-1)}} \frac{1}{I!}\langle\mathcal{S}_{k,m}(x),[j_{n-1}\ldots j_1]_I \rangle \delta x_k^{j_{n}}\,.\]
\end{proof}
We finally use these preparatory results to reprove  a well-known equality in literature of  signature using our formulation of the EML identity in  Proposition \ref{thm:correction_signature} and some classical properties of the iterated sum signatures.
\begin{theorem}\label{last_thm}
For any given time series $x\colon [[0, N]]\to \mathbb{R}^d$, 
 the  signature of the linear interpolation path $X$ at the integer values $m\in [[0, N]]$ is given for any component $j_1\ldots j_n$ by the expression
\[ \langle S_{0,m}(X), j_1\ldots j_n\rangle= \sum_{I\in C(n)}\frac{1}{I!} \langle \mathcal{S}_{0,m}(x),[j_{1}\ldots j_{n}]_I \rangle.\]
\end{theorem}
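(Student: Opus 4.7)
My approach bypasses the sawtooth machinery developed in the previous section and instead exploits Chen's multiplicative identity together with the piecewise affine structure of the interpolating path $X$. By Chen's identity applied at the integer breakpoints,
\[
S_{0,m}(X) = S_{0,1}(X) \otimes S_{1,2}(X) \otimes \cdots \otimes S_{m-1,m}(X),
\]
and on each interval $[k,k+1]$ the path is affine with $\diff X_s = \Delta x_k \, \diff s$, so the iterated simplex integral evaluates explicitly to
\[
\pi_n(S_{k,k+1}(X)) = \frac{(\Delta x_k)^{\otimes n}}{n!} \quad \text{for every } n \geq 0.
\]

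Substituting this and projecting onto the degree-$n$ component yields
\[
\pi_n(S_{0,m}(X)) = \sum_{\substack{n_0 + \cdots + n_{m-1} = n \\ n_k \geq 0}} \bigotimes_{k=0}^{m-1} \frac{(\Delta x_k)^{\otimes n_k}}{n_k!}.
\]
When pairing with the word $j_1\ldots j_n$, the factors with $n_k = 0$ are inert. I would reorganise the sum by first choosing the set $\{l_1 < \cdots < l_p\}$ of indices with $n_l > 0$ together with the ordered tuple $(I_1, \ldots, I_p) := (n_{l_1}, \ldots, n_{l_p})$, which is tautologically a composition of $n$. This converts the degenerate sum over $(n_0, \ldots, n_{m-1})$ into a double sum over $I \in C(n)$ and strictly increasing $p$-tuples $l_1 < \cdots < l_p$ in $\{0, \ldots, m-1\}$.

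It then remains to identify the inner expression: the product $\prod_{r=1}^{p} \frac{1}{I_r!}(\Delta x_{l_r})^{\otimes I_r}$ paired with the consecutive block $j_{b_{r-1}+1}\ldots j_{b_r}$, where $b_r := I_1 + \cdots + I_r$, produces exactly $\frac{1}{I!}\,\Delta x_{l_r}^{[j_{b_{r-1}+1}\ldots j_{b_r}]}$ per block. Summing over the increasing tuples and comparing with Definition \ref{definizione_ISS} gives precisely $\frac{1}{I!}\langle \mathcal{S}_{0,m}(x), [j_1 \ldots j_n]_I\rangle$, which is the desired identity. The only nontrivial point is the indexing bookkeeping that matches the consecutive blocks of the word $j_1\ldots j_n$ with the correct tensor powers $(\Delta x_{l_r})^{\otimes I_r}$; I expect this to be the main, though essentially combinatorial, obstacle.
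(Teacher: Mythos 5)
Your proof is correct, and it takes a genuinely different route from the paper. You obtain the identity directly from Chen's multiplicativity at the integer breakpoints together with the closed form $\pi_n(S_{k,k+1}(X)) = (\Delta x_k)^{\otimes n}/n!$ for an affine segment, and then resum the resulting expansion over $(n_0,\dots,n_{m-1})$ with $n_0+\cdots+n_{m-1}=n$ by recording the composition $I$ of nonzero parts and the increasing tuple $l_1<\cdots<l_p<m$ of indices carrying them; pairing the elementary tensor with the word $j_1\ldots j_n$ block by block then reproduces exactly the iterated-sum signature of Definition \ref{definizione_ISS}, with the global factor $1/I!=\prod_r 1/I_r!$ (your phrase ``per block'' is a cosmetic slip, the bookkeeping itself is sound). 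This is essentially the classical argument behind the references the paper cites for this identity being well known. The paper instead proves the theorem by induction on $n$, starting from the sawtooth recursion \eqref{eq:sawtooth} of Proposition \ref{thm:correction_signature}, the explicit integer-time values \eqref{first_identity} of the sawtooth signature, and a substantial Hopf-algebraic input: the quasi-shuffle character property of $\mathcal{S}(x)$, Hoffman's morphism $\Phi_H$ between the shuffle and quasi-shuffle Hopf algebras, the antipode relations, and Chen's identity for iterated-sum signatures. What your route buys is brevity and self-containedness, avoiding all of that machinery; what the paper's route buys is the demonstration that the known formula drops out of the new sawtooth/interpolation framework \eqref{eq:discrete_signature}, which is precisely the point the authors want to make, and it produces the explicit values of $Z$ at integer times as a by-product. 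As a standalone proof of Theorem \ref{last_thm}, yours is complete once the block-matching step is written out carefully.
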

 
	\begin{proof}
We prove the result by induction on $n\geq 1$ whose base is trivial. Using the identity  \ref{eq:sawtooth} projected on the component $j_1\ldots j_{n+1}$ and \eqref{first_identity} we derive
\[\begin{split}
   & \langle S_{0,m}(X), j_1\ldots j_{n+1}\rangle \\& = \sum_{k=0}^{m-1}\bigg(  \langle S_{0,k}(X), j_1\ldots j_{n}\rangle\\&+\sum_{p= 2}^{n+1}(-1)^{p} \sum_{J\in C(p-1)} \frac{1}{J!} \langle S_{0,m}(X), j_1\ldots j_{n+1-p}\rangle\langle \mathcal{S}_{k,m}(x),[j_{n}\ldots j_{n+2-p}]_J \rangle\bigg)\delta x_k^{j_{n+1}}\\&+\left(\sum_{p= 2}^{n+1}(-1)^{p+1} \sum_{J\in C(p)} \frac{1}{J!}  \langle S_{0,m}(X), j_1\ldots j_{n+1-p}\rangle\langle \mathcal{S}_{0,m}(x),[j_{n+1}\ldots j_{n+2-p}]_J \rangle\right)\\& = (I) + (II)\,.
\end{split}\]
We treat the terms $(I)$ and $(II)$ separately. Using  the induction hypothesis, we write the term $(II)$ as follows 
\[
\begin{split}(II)&=-\sum_{p= 0}^{n+1} \sum_{J\in C(p)}\sum_{I\in C(n+1-p)} \frac{1}{I!J!}  \langle \mathcal{S}_{0,m}(x), [j_1\ldots j_{n+1-p}]_I\rangle\langle \mathcal{S}_{0,m}(x),[\mathcal{A}(j_{n+2-p}\ldots j_{n+1})]_J \rangle\\&- \sum_{I\in C(n)}\frac{1}{I!} \langle \mathcal{S}_{0,m}(x),[j_{1}\ldots j_{n}]_I \rangle \langle \mathcal{S}_{0,m}(x), j_{n+1} \rangle +  \sum_{I\in C(n+1)} \frac{1}{I!} \langle \mathcal{S}_{0,m}(x),[j_{1}\ldots j_{n+1}]_I \rangle\,, 
\end{split}
\]
where $\mathcal{A}\colon T(\mathfrak{A}_d)\to T(\mathfrak{A}_d)$ is the linear map defined on $T(\mathfrak{A}_d)$ the vector space of finite linear combinations of words in $\mathfrak{A}_d$  and 
$\mathcal{A}(a_1\ldots a_n)= (-1)^{n}a_n\ldots a_1$ called the \emph{shuffle antipodal map}. To understand the first contribution  we first recall three fundamental algebraic results: firstly the map $ S_{0,m}(X)\colon T(\mathfrak{A}_d)\to \mathbb{R}$ is a character with respect to the shuffle product $\qshuffle$ \cite[Theorem 3.4]{Tapia20}, then the application $\Phi_H\colon T(\mathfrak{A}_d) \to T(\mathfrak{A}_d)$ given on any word $w$
\[\Phi_H(w)= \sum_{I\in C(|I|)} \frac{1}{I!}[w]_I\] 
is a Hopf algebra morphism between $(T(\mathfrak{A}_d), \shuffle, \Delta)$ and $(T(\mathfrak{A}_d), \qshuffle, \Delta)$ with $\Delta$ the deconcatenation coproduct \cite[Theorem 3.3]{hoffman2000}. Finally the map $\mathcal{A}$
satisfies for any $n\geq 1$ the antipode relation
\[\shuffle (\text{id}\otimes \mathcal{A})\Delta (a_1\ldots a_n)=0\,.\]
 Using these identities we derive
 \[
\begin{split}&-\sum_{p= 0}^{n+1} \sum_{J\in C(p)}\sum_{I\in C(n+1-p)} \frac{1}{I!J!}  \langle \mathcal{S}_{0,m}(x), [j_1\ldots j_{n+1-p}]_I\rangle\langle \mathcal{S}_{0,m}(x),[\mathcal{A}(j_{n-p+2}\ldots j_{n+1})]_J \rangle\\&=-\langle \mathcal{S}_{0,m}(x),\sum_{p= 0}^{n+1} \Phi_H(j_1\ldots j_{n+1-p})\qshuffle  \Phi_H(\mathcal{A}(j_{n-p+2}\ldots j_{n+1}))\rangle\\& =-\langle \mathcal{S}_{0,m}(x), \qshuffle(\Phi_H\otimes \Phi_H)(\text{id}\otimes \mathcal{A})(j_{1}\ldots j_{n+1}))\rangle\\&= -\langle \mathcal{S}_{0,m}(x), \Phi_H\shuffle(\text{id}\otimes \mathcal{A})(j_{1}\ldots j_{n+1}))\rangle= -\langle \mathcal{S}_{0,m}(x), \Phi_H (0)\rangle=0\,,
\end{split}
\]
from which we conclude
\begin{equation}\label{eq:first_technical_lemma}
\begin{split}(I)=- \sum_{I\in C(n)}\frac{1}{I!} \langle \mathcal{S}_{0,m}(x),[j_{1}\ldots j_{n}]_I \rangle \langle \mathcal{S}_{0,m}(x), j_{n+1} \rangle +  \sum_{I\in C(n+1)} \frac{1}{I!} \langle \mathcal{S}_{0,m}(x),[j_{1}\ldots j_{n+1}]_I \rangle\,.
\end{split}
\end{equation}
To deal with the term $(I)$, we apply again the induction assumption, the definitions of the previous maps and a change of variable in the sum over $p$ to obtain 
\[\begin{split}(I)&=\sum_{k=0}^{m-1}\bigg(\sum_{p= 1}^{n}(-1)^{p+1} \langle \mathcal{S}_{0,m}(x), \Phi_H(j_1\ldots j_{n-p})\rangle\langle \mathcal{S}_{k,m}(x),\Phi_H(j_{n}\ldots j_{n+1-p})\rangle\\&+ \langle \mathcal{S}_{0,m}(x), \Phi_H(j_1\ldots j_n)\rangle \bigg)\delta x_k^{j_{n+1}}\\&=\sum_{k=0}^{m-1}\bigg(-\sum_{p= 0}^{n} \langle \mathcal{S}_{0,m}(x), \Phi_H(j_1\ldots j_{n-p})\rangle\langle \mathcal{S}_{k,m}(x),\Phi_H(\mathcal{A}(j_{n+1-p}\ldots j_{n}))\rangle\\&+ \langle \mathcal{S}_{0,m}(x), \Phi_H(j_1\ldots j_n)\rangle \bigg)\delta x_k^{j_{n+1}}+\sum_{k=0}^{m-1}\langle \mathcal{S}_{0,k}(x), \Phi_H(j_1\ldots j_{n})\rangle\delta x_k^{j_{n+1}}\,.
\end{split}\]
Let us further introduce the quasi-shuffle antipodal map $\widehat{\mathcal{A}}\colon \colon T(\mathfrak{A}_d)\to T(\mathfrak{A}_d)$ defined on a  word $(a_1\ldots a_n)$ by
\[\widehat{\mathcal{A}}(a_1\ldots a_n)= (-1)^n\sum_{I\in C(n)} [a_n\ldots a_1]_I\,.\]
From standard properties on Hopf algebras we derive
\[\Phi_H \circ \mathcal{A}=\widehat{\mathcal{A}} \circ \Phi_H\,, \quad \langle \mathcal{S}_{k,m}(x), \widehat{\mathcal{A}} (a_1\ldots a_n)\rangle= \langle (\mathcal{S}_{k,m}(x))^{-1}, a_1\ldots a_n\rangle\,\]    
where the inverse is taken with respect to the tensor product $\otimes$ operation. Combining these   properties with Chen's identity for iterated sum signatures, see  \cite[Theorem 3.4]{Tapia20}, for any integer $0\leq k\leq m-1$ we obtain
\[\begin{split}&\sum_{p= 0}^{n} \langle \mathcal{S}_{0,m}(x), \Phi_H(j_1\ldots j_{n-p})\rangle\langle \mathcal{S}_{k,m}(x),\Phi_H(\mathcal{A}(j_{n+1-p}\ldots j_{n}))\rangle\\&= \sum_{p= 0}^{n} \langle \mathcal{S}_{0,m}(x), \Phi_H(j_1\ldots j_{n-p})\rangle\langle \mathcal{S}_{k,m}(x),\widehat{\mathcal{A}} (\Phi_H(j_{n+1-p}\ldots j_{n}))\rangle\\& =\sum_{p= 0}^{n} \langle \mathcal{S}_{0,m}(x), \Phi_H(j_1\ldots j_{n-p})\rangle\langle (\mathcal{S}_{k,m}(x))^{-1},(\Phi_H(j_{n+1-p}\ldots j_{n}))\rangle\\&=\langle \mathcal{S}_{0,m}(x)\otimes(\mathcal{S}_{k,m}(x))^{-1} ,\Phi_H(j_{1}\ldots j_{n})\rangle=\langle \mathcal{S}_{0,k}(x) ,\Phi_H(j_{1}\ldots j_{n})\rangle\,.
\end{split}\]
Plugging this expression into the definition of $(II)$  we finally obtain 
\begin{equation}\label{eq:second_technical_lemma}
\begin{split}(II)=\sum_{k=0}^{m-1}\langle \mathcal{S}_{0,m}(x), \Phi_H(j_1\ldots j_{n})\rangle\delta x_k^{j_{n+1}}= \sum_{I\in C(n)}\frac{1}{I!} \langle \mathcal{S}_{0,m}(x),[j_{1}\ldots j_{n}]_I \rangle \langle \mathcal{S}_{0,m}(x), j_{n+1} \rangle \,.
\end{split}
\end{equation}
Adding the two expressions \eqref{eq:first_technical_lemma} and \eqref{eq:second_technical_lemma} yields the desired quantity.	\end{proof}

	\bibliographystyle{alpha}
	\bibliography{bibliography}
	
\end{document}